\newtheorem{theorem}{Theorem}[section]
\newtheorem{lemma}[theorem]{Lemma}
\newtheorem{cor}[theorem]{Corollary}
\newtheorem{prop}[theorem]{Proposition}
\renewcommand{\a}{{\underline{a}}}
\renewcommand{\b}{{\underline{b}}}
\newcommand{\p}{{\underline{p}}}
\newcommand{\q}{{\underline{q}}}
\begin{document}
\title{The dimension of projections of self-affine sets and measures}
\author{Kenneth Falconer and Tom Kempton}
\maketitle
\setlength{\parskip}{0.3cm} \setlength{\parindent}{0cm}

\begin{abstract} 
Let $E$ be a plane self-affine set defined by affine transformations with linear parts given by matrices with positive entries. We show that if $\mu$ is a Bernoulli measure on $E$ with $\dim_H\mu = \dim_L\mu$, where $\dim_H$ and $\dim_L$ denote  Hausdorff and Lyapunov dimensions,  then the 
projection of $\mu$ in all but at most one direction has Hausdorff dimension $\min\{\dim_H \mu, 1\}$. We transfer this result to sets and show that many self-affine sets have projections of dimension $\min\{\dim_H E, 1\}$ in all but at most one direction\footnote{This research was supported by EPSRC grant EP/K029061/1}.

\end{abstract}

\section{Introduction}
\setcounter{equation}{0}
Marstrand's projection theorem states that, given a set $E\subset\mathbb R^n$, for almost every $m$-dimensional linear subspace $K$ of $\mathbb R^m$ 
\begin{equation}\label{projdim}
\dim_H \pi_K E =  \min\{\dim_H E, m\},
\end{equation}
where  $\pi_K$ denotes orthogonal projection onto $K$ and $\dim_H$ denotes Hausdorff dimension, see \cite{FalBk, Marstrand, Mat}. The analogous projection theorem for measures, generally proved using potential theoretic methods, see \cite{Kau}, is that for a finite Borel measure $\mu$ on $\mathbb{R}^n$,
\begin{equation}\label{projdimmes}
\dim_H \pi_K \mu =  \min\{\dim_H \mu, m\},
\end{equation}
for almost all subspaces $K$, where the  {\it (lower) Hausdorff dimension} of a measure is given in terms of dimensions of sets by
$$\dim_H\nu=\inf\{\dim_H U:\nu(U)=1\}.$$
However, these projection theorems are no help in identifying the subspaces $K$, if any, for which \eqref{projdim} or  \eqref{projdimmes} fail. Recently there has been a great deal of interest in finding the  `exceptional directions' for projections of fractals and fractal measures of various types, especially those with a recursive structure or which are dynamically generated, see the survey \cite{FalFrX} and references therein. In particular, various classes of measures and sets have been shown to have every projection satisfying \eqref{projdim}.
For example, for self-similar sets satisfying the open set condition, \eqref{projdim} holds for all subspaces $K$ provided that the group generated by the orthonormal components of the defining similarities is dense in the orthogonal group $O(n)$, with similar results for measures, see \cite{FalXio,PerSher, HochShmerProj}. On the other hand, if the group is not dense then there are always some exceptional directions, see \cite{Far}.

It is natural to ask under what circumstances  \eqref{projdim} holds for self-affine sets and \eqref{projdimmes} for self-affine measures (by which we mean Bernoulli measures on self-affine sets) in all or virtually all directions. Dimensional analysis in the self-affine case is more awkward than in the self-similar case, not least because the dimensions of the sets or measures themselves are not continuous in the defining affine transformations. Nevertheless, the dimensions of many specific self-affine sets and also of generic self-affine sets in certain families are now known. In particular, the Hausdorff dimension $\dim_H E$ of a self-affine set $E$  `often' equals its affinity dimension $\dim_A E$, defined in \eqref{dimaff1} - \eqref{dimaff} below, see \cite{FalconerAffineSurvey}. 

Much of the work to date on projections of self-affine sets concerns projections of plane self-affine carpets, that is where the defining affine maps preserve both horizontal and vertical axes; see \cite{Almarza,FFS} for various examples. Perhaps not surprisingly, for many carpets  \eqref{projdim} fails only for projections in the horizontal and/or  vertical directions.

In this paper we consider projections of plane self-affine sets measures where the linear parts of the self-affine maps may be represented by matrices with all entries positive, in other words where the linear parts map the positive quadrant strictly into itself. 
We show that in many cases projections  onto lines in {\em all} directions (in some cases all but one direction) satisfy  \eqref{projdim} or  \eqref{projdimmes}. Many self-affine sets and measures fit into this context, both in the `generic setting' (provided that the affine maps are sufficiently contracting), see \cite{FalconerAffinity1,Sol}, and for many specific cases including those presented in \cite{Barany, FalKemp,HueLal}. 

To study projections of a set one normally examines the projections of a suitable measure on the set. Thus, we first study projections of a Bernoulli measure $\mu$ supported by a self-affine set $E$. Then  $\mu$  induces a measure $\mu_F$, known as a Furstenberg measure, on the space of line directions. Recent results on the  dynamical structure  of self-affine sets \cite{Barany, FalKemp} imply that the projections of $\mu$ are exact dimensional in  $\mu_F$-almost all directions. Together with  lower bounds for the dimension of projections from \cite{HochShmerProj} involving $r$-scale entropies we conclude that the dimensions of projections of measures are well-behaved in all but perhaps one direction. The results on projections of self-affine sets follow by supporting suitable Bernoulli measures on $E$.

\section{Preliminaries}
\setcounter{equation}{0}
In this section we introduce the self-affine sets that we consider along with pertinent notation. 
Let $A_1,\cdots, A_k$ be a collection of  $2\times 2$ matrices of Euclidean norm less than $1$ with strictly positive entries, and let $d_1,\cdots, d_k\in\mathbb R^2$. Then the maps  $T_i:\mathbb R^2\to \mathbb R^2 \ (i =1,\cdots, k)$  given by
\begin{equation}\label{ifsmaps}
T_i(x)=A_i(x)+d_i \qquad (x\in\mathbb R^2) 
\end{equation}
are affine contractions which form an iterated function system (IFS).  By standard  IFS theory, see \cite{FalBk, Hut} there exists  a unique non-empty compact set $E$ satisfying
\[
E=\bigcup_{i=1}^k T_i(E),
\]
termed a  {\it self-affine set}. 

There is a natural coding on $E$ and its usual iterated construction.
Let $\Lambda:=\{1,\cdots,k\}$, let 
$\Sigma^*= \bigcup_{n=0}^\infty \Lambda^n$ be the space of finite words formed by elements of $\Lambda$, and let $\Sigma= \Lambda^{\mathbb N}$ be the corresponding space of infinite words. 
Associated with each  $a_1\cdots a_n \in \Sigma^*$ is the {\it cylinder}
$$[a_1\cdots a_n] = \{a_1 \cdots a_n b_{n+1} b_{n+2}, \cdots : b_i \in \Lambda\}\subset \Sigma;$$
 the cylinders form a basis of open and closed sets for the natural topology on $\Sigma$. 
 
We abbreviate compositions of the $T_i$ by $T_{a_1\cdots a_n}:=T_{a_1}\circ\cdots \circ T_{a_n}$ for $a_1 \cdots a_n \in \Sigma^*$.
Let $D\subset \mathbb R^2$ be the unit diskso that 
$$D_{a_1\cdots a_n} := T_{a_1\cdots a_n}(D).$$
are  the ellipses obtained as  images of $D$ under compositions of the $T_i$.
  By scaling we may assume that $T_i(D) \subset D$ for all $i$, in which case the self-affine set $E$ may be written as
 $$ E = \bigcap_{n=0}^\infty \bigcup_{a_1\cdots a_n}  T_{a_1\cdots a_n}(D).$$

Let $\mathbb P\mathbb R^1 = (\mathbb R^2\setminus 0)/\sim$ be 1-dimensional real projective space where $\sim$ identifies points on each line through the origin. We may parameterise $\mathbb P\mathbb R^1$ by the angle between lines  and the horizontal axis, and these angles define a metric $d$ on $\mathbb P\mathbb R^1$ in the obvious way.  Each matrix $A_i^{-1}$ induces a projective linear transformation $\phi_i: \mathbb P\mathbb R^1\to\mathbb P\mathbb R^1$ given by $\phi_i:= A_i^{-1}/\sim$, that is, $A_i^{-1}$ maps straight lines at angle $\theta$ to straight lines at angle $\phi_i(\theta)$. Since the matrices $A_i$ are strictly positive, each $\phi_i$ restricted to the negative quadrant ${\mathcal Q}_2$ is a contraction so that the system of contractions $(\phi_i)_{i=1}^k$ is an iterated function system on ${\mathcal Q}_2$. Let $F$ be the attractor of this IFS, that is the non-empty compact subset of ${\mathcal Q}_2$ such that $F=\bigcup_{i=1}^k T_i(F)$,

Now let $\mu$ be a Bernoulli probability measure on $\Sigma$; for notational convenience    
we also write $\mu$ for the corresponding measure on $E$, that is its image under the map
$(a_1 a_2 \cdots) \mapsto  \bigcap_{n=0}^\infty  T_{a_1\cdots a_n}(D)$.

The {\it Furstenberg measure} $\mu_F$ is defined to be the stationary probability measure supported by  $K \subset \mathbb{RP}^1$ associated to the maps $\phi_i$ chosen according to the measure $\mu$, see for example,  \cite{BPS}. Specifically, $$
\mu_F(U)=\sum_{i=1}^k \mu[i]\mu_F(\phi_i^{-1}(U)),
$$
for all Borel sets $U$.
The  Furstenberg measure is central to the proofs of our results.

Ideally we would like to be able to express the Hausdorff dimension of a self-affine set $E$ in terms of its defining IFS maps \eqref{ifsmaps}, as can be done for self-similar sets subject to a separation condition. (For the definition and basic properties of Hausdorff dimension, which we denote by  $\dim_H$,  see for example \cite{FalBk, Mat}.) A natural candidate for $\dim_H E$ is the affinity dimension defined in \cite{FalconerAffinity1} in terms of the linear parts of the IFS maps.
Let $\alpha_1(A)\geq \alpha_2(A)\geq 0$ be the {\it singular values} of a linear mapping or matrix  $A$ on $\mathbb R^2$, that is  the lengths of the major and minor semiaxes of the  ellipse
$A(D)$ or equivalently the positive square roots of the eigenvalues of $AA^T$. For $0\leq s\leq 2$ we define the {\it singular value function} of $A$ by 
$$\phi^s(A)=\left\lbrace
\begin{array}{cl}
\alpha_1^s & 0<s\leq 1\\
\alpha_1\alpha_2^{s-1} & 1\leq s\leq 2\\
(\det A)^{s/2} & 2\leq s
 \end{array}\right. .$$
The submultiplicativity of the singular value functions enables us to define the {\it affinity dimension} of a set $E\subset \mathbb R^2$ defined by the IFS \eqref{ifsmaps}  by
\begin{equation}\label{dimaff1}
\dim_{A}(A_1,\cdots,A_k)\equiv    \dim_{A}E=\Big\{s:\lim_{n\to\infty}\big(\sum_{a_1\cdots a_n\in\Lambda^n}\phi^s(A_{a_1\cdots a_{n}})\big)^{1/n} = 1\Big\}
\end{equation}
or equivalently
\begin{equation}\label{dimaff}
\dim_{A}(A_1,\cdots,A_k) \equiv \dim_{A}E=\inf\Big\{s:\sum_{n=1}^{\infty}\sum_{a_1\cdots a_n\in\Lambda^n}\phi^s(A_{a_1\cdots a_{n}})<\infty\Big\}.
\end{equation}
Note that the affinity dimension depends on the $\{A_i\}$ that occur in the iterated function theorem that defines $E$, rather than on $E$ itself, but referring to $\dim_AE$ generally causes little problem. 
For every self-affine set $E$ it is the case that $\dim_{H}E \leq \dim_{A}E$, but equality holds in many cases, both generic and specific, see the survey \cite{FalconerAffineSurvey} and recent examples in \cite{Barany, FalKemp}. 

We will also refer to  Lyapunov dimension, which reflects the geometry of the self-affine set $E$ relative to the measure $\mu$. The {\it Lyapunov exponents} $\lambda_1(\mu), \lambda_2(\mu)$ are constants such that, for $\mu$-almost every $a_1 a_2 \cdots \in \Sigma$,
\begin{equation}\label{lyap}
\lim_{n\to\infty}\frac{1}{n}\log \alpha_i(a_1 \cdots a_n)=\lambda_i \qquad (i = 1,2).
\end{equation}
The {\it Lyapunov dimension} of $\mu$ is given by
\begin{equation}\label{lyapdim}
\dim_L \mu:= \left\lbrace
\begin{array}{cc}
\dfrac{h_{\sigma}(\mu)}{-\lambda_1(\mu)} & \quad h_{\sigma}(\mu)\leq -\lambda_1(\mu)\\
1+\dfrac{h_{\sigma}(\mu) + \lambda_1(\mu)}{-\lambda_2(\mu)}& \quad h_{\sigma}(\mu)\geq -\lambda_1(\mu)
\end{array}\right. ,
\end{equation}
where $h_{\sigma}(\mu)$ is the Kolmogorov-Sinai entropy of the system $(\Sigma,\sigma,\mu)$ with $\sigma$  the left shift on $\Sigma$.
Note that $\dim_L \mu$ depends only on the matrices $\{A_1,\cdots, A_k\}$ and the measure $\mu$. It is always the case that $\dim_H(\mu)\leq \dim_L(\mu)$, see \cite{BPS}.

To obtain our dimension estimates, we will use  $r$-{\it scale entropies}, given by
$$H_r(\nu):= -\int_{[-1,1]}\log \nu(B_r(x)) d\nu(x)$$
where $\nu$ is a probability measure on $[-1,1]$ and  $r>0$.
A  measure $\nu$ is termed {\it exact dimensional of dimension} $\beta$ if
\begin{equation}\label{exact}
\lim_{r \to 0} \frac{\log \nu(B_r(x))}{\log r} = \beta
\end{equation}
for $\nu$-almost all $x$. In particular, if $\nu$ is exact dimensional then 
\begin{equation}\label{exactent}
\dim_H \mu = \beta = \lim_{r\to 0} \frac{H_r(\nu)}{-\log r}.
\end{equation}

\section{Statement of Results}
\setcounter{equation}{0}
We define $B\subset\mathbb P\mathbb R^1$ by $\mathbb{PR}^1\setminus\{\theta\}$ if all matrices $A_i$ have a common maximal eigenvector in some direction $\theta$, and by $B=\mathbb {PR}^1$ otherwise. The set $\mathbb{PR}^1\setminus B$ consists of at most one angle and represents the set of possible exceptional directions for our projection theorems.

It was proved in \cite[Proposition 3.3]{Barany} that there exists a constant $\beta(\mu)$ such that for $\mu_F$-almost every $\theta$ the projected measure $\pi_{\theta}(\mu)$ is exact dimensional with dimension $\beta(\mu)$. This allows us to state our main theorem.

\begin{theorem}\label{MainThm}
Let $\mu$ be a Bernoulli measure on a planar self-affine set defined by an IFS \eqref{ifsmaps} with strictly positive matrices $A_i$. Then for all $\theta\in B$
\[
\dim_H  \pi_{\theta}(\mu)\geq\beta(\mu).
\]
\end{theorem}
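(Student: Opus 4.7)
My plan is to combine a self-similar decomposition of $\pi_\theta \mu$ with the equidistribution of an induced projective random walk, using $r$-scale entropies to transfer the almost-everywhere exact dimensionality supplied by Bárány's Proposition~3.3 from $\mu_F$-generic directions to every $\theta \in B$. The starting point is the intertwining
\[
\pi_\theta \circ T_i \ = \ c_{i,\theta}\,\pi_{\widetilde\phi_i(\theta)} + \pi_\theta(d_i),
\]
where $c_{i,\theta}>0$ is the scalar $|A_i^T e_\theta|$ and $\widetilde\phi_i$ is a projective map on $\mathbb{PR}^1$ derived from $A_i^T$, which (up to the perpendicularity duality) plays the same role as $\phi_i$ in the paper. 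Positivity of the $A_i$ makes each $\widetilde\phi_i$ a strict projective contraction. Iterating this identity over words of length $n$ yields
\[
\pi_\theta \mu \ = \ \sum_{|w|=n} \mu[w]\, (S_{w,\theta})_* \pi_{\widetilde\phi_w(\theta)} \mu,
\]
where $S_{w,\theta}$ is an affine contraction of $\mathbb{R}$ with ratio $c_{w,\theta}$.

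Next, by the standard concavity of $r$-scale entropy across convex combinations, together with the scaling identity $H_r\bigl((S_{w,\theta})_*\nu\bigr) = H_{r/c_{w,\theta}}(\nu)$, the decomposition yields
\[
H_r(\pi_\theta \mu) \ \geq \ \sum_{|w|=n} \mu[w]\, H_{r/c_{w,\theta}}\bigl(\pi_{\widetilde\phi_w(\theta)}\mu\bigr).
\]
Dividing by $-\log r$, using that $-\log(r/c_{w,\theta})/(-\log r) \to 1$ as $r \to 0$ with $n$ and $w$ fixed, and invoking the exact-dimensionality statement $\liminf_{r'\to 0} H_{r'}(\pi_\eta \mu)/(-\log r') = \beta(\mu)$ valid for $\mu_F$-almost every $\eta$, I expect to obtain
\[
\liminf_{r\to 0} \frac{H_r(\pi_\theta \mu)}{-\log r} \ \geq \ \beta(\mu) \sum_{|w|=n} \mu[w]\, \mathbf 1\{\widetilde\phi_w(\theta)\in G\}
\]
for any $\mu_F$-full Borel set $G$ of good directions. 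The projection lower bounds of \cite{HochShmerProj} will then convert such an $r$-entropy lower bound into $\dim_H \pi_\theta\mu \geq \beta(\mu)(1-\epsilon)$, after which sending $\epsilon\to 0$ finishes the argument.

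What remains, and is the heart of the proof, is showing that for every $\theta\in B$ and $\epsilon>0$ one can choose $n$ and $G$ making the right-hand-side factor exceed $1-\epsilon$. This is precisely where the hypothesis $\theta\in B$ enters: $\theta$ is not a common fixed point of the $\widetilde\phi_i$'s, and strict projective contraction of positive matrices implies that the distributions $\nu_n^\theta := \sum_{|w|=n}\mu[w]\,\delta_{\widetilde\phi_w(\theta)}$ converge weakly to $\mu_F$. The main technical obstacle is to upgrade this weak convergence to $\nu_n^\theta(G) \to 1$ along a $\mu_F$-full but a priori measure-theoretically irregular set $G$; I would handle it by an Egorov-type uniformization, passing to a compact subset $G_\epsilon\subset G$ of $\mu_F$-measure exceeding $1-\epsilon$ on which $H_{r'}(\pi_\eta\mu)/(-\log r')\to\beta(\mu)$ uniformly, then approximating $G_\epsilon$ by a $\mu_F$-continuity set so that the Portmanteau theorem applies.
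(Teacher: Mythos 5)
Your decomposition of $\pi_\theta\mu$ and the concavity of $r$-scale entropy are fine as far as they go, but the quantity they produce is the wrong one. The bound $\liminf_{r\to 0}H_r(\pi_\theta\mu)/(-\log r)\geq \beta(\mu)(1-\epsilon)$ is a lower bound on the (lower) \emph{entropy dimension} of $\pi_\theta\mu$, and entropy dimension only bounds Hausdorff dimension from \emph{above}: by Fatou, $\liminf_r H_r(\nu)/(-\log r)\geq \dim_H\nu$ always (consider $\nu=\frac12\delta_0+\frac12\mathcal{L}|_{[0,1]}$, which has entropy dimension $\frac12$ but $\dim_H\nu=0$). So the final step "the projection lower bounds of Hochman--Shmerkin convert such an $r$-entropy lower bound into $\dim_H\pi_\theta\mu\geq\beta(\mu)(1-\epsilon)$" does not go through: what their Theorems 4.4 and 5.4 require as input is a \emph{local entropy average} --- for $\mu$-almost every branch $a_1a_2\cdots$ of a suitable tree, the average over scales $i=1,\dots,n$ of the entropies $H_{2^{-(i+1)N}}$ of the projected \emph{conditional} measures $\mu_{[a_1\cdots a_{n_i}]}$ must be bounded below. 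Your convex-combination bound averages over all words of length $n$ at a single scale $r$; it is an "annealed" global statement, not the pointwise-along-branches, multi-scale statement the machinery needs. This is why the paper sets up the stopping times $n_i$, the reparametrised tree $\Sigma_N$, and its Theorem 4.3 before doing anything else.

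The second gap is in the equidistribution step, and it would persist even if the first were repaired. What is needed is the \emph{quenched} statement: for every $\theta\in B$ and $\mu$-a.e.\ $a_1a_2\cdots$, the orbit $(\phi_{a_{n_i}\cdots a_1}(\theta))_{i}$ sampled at the stopping times equidistributes with respect to a measure equivalent to $\mu_F$ (the paper's Proposition 4.5, proved via a suspension flow over the projective skew product and ergodicity of its time-$N$ map --- the stopping times are not constant, so this is not just the Birkhoff theorem). Weak convergence of the marginals $\nu_n^\theta=\sum_{|w|=n}\mu[w]\delta_{\widetilde\phi_w(\theta)}$ to $\mu_F$ is the annealed version and is strictly weaker. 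Moreover, your proposed Egorov/Portmanteau upgrade to $\nu_n^\theta(G)\to 1$ for a $\mu_F$-full set $G$ fails: each $\nu_n^\theta$ is a finite sum of atoms, a full-measure compact set $G_\epsilon$ can miss every atom, and the Portmanteau inequality for closed sets, $\limsup_n\nu_n^\theta(K)\leq\mu_F(K)$, points in the wrong direction for a lower bound. The way around this (and the way the paper proceeds) is not to restrict to a good set at all, but to integrate the fixed-scale entropy $\alpha\mapsto H_{2^{-N}}(\pi_\alpha(\mu))$, which is lower semicontinuous, against the equidistributing sequence, obtaining $\liminf_n\frac1n\sum_i H_{2^{-N}}(\pi_{\phi_{a_{n_i}\cdots a_1}(\theta)}\mu)\geq\int H_{2^{-N}}(\pi_\alpha\mu)\,d\nu_F(\alpha)$, and only then let $N\to\infty$ using exact dimensionality and dominated convergence.
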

This yeilds the following corollaries
\begin{cor}
Let $\mu$ be a Bernoulli measure on a planar self-affine set associated to strictly positive matrices $A_i$. Suppose $\dim_H \mu=\dim_L \mu $. Then for all $\theta\in B$,
\[
\dim_H \pi_{\theta}(\mu)=\min\{\dim_H \mu,1\}.
\] 
\end{cor}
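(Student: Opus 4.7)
The plan is to squeeze $\dim_H \pi_\theta(\mu)$ between matching bounds and then identify the common value. The upper bound is immediate: since each $\pi_\theta$ is $1$-Lipschitz into $\mathbb{R}$, we have
\[
\dim_H \pi_\theta(\mu) \leq \min\{\dim_H\mu,\,1\}.
\]
The lower bound comes for free from Theorem~\ref{MainThm}, which already yields $\dim_H \pi_\theta(\mu) \geq \beta(\mu)$ for every $\theta \in B$. So the corollary reduces entirely to showing that
\[
\beta(\mu) = \min\{\dim_H\mu,\,1\}
\]
whenever $\dim_H \mu = \dim_L \mu$.

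To establish this identification I would split according to the two branches of the Lyapunov dimension \eqref{lyapdim}. When $h_\sigma(\mu) \leq -\lambda_1(\mu)$, the hypothesis forces $\dim_H\mu = h_\sigma(\mu)/(-\lambda_1(\mu)) \leq 1$, so $\min\{\dim_H\mu,1\} = \dim_H\mu$; in this regime one expects the $\mu_F$-typical projection to be dimension-preserving, giving $\beta(\mu) = \dim_H\mu$. When $h_\sigma(\mu) \geq -\lambda_1(\mu)$, the hypothesis forces $\dim_H\mu \geq 1$, so $\min\{\dim_H\mu,1\} = 1$; here the projected measure must have full dimension one, because $\pi_\theta\mu$ sits inside $\mathbb{R}$ and any deficit $\beta(\mu) < 1$ would propagate, via a Ledrappier--Young decomposition of $\mu$ along the stable direction, to a strict gap $\dim_H\mu < \dim_L\mu$ contradicting the hypothesis.

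The technical heart of the corollary is thus the dimension formula $\beta(\mu) = \min\{\dim_H\mu,1\}$ under $\dim_H\mu = \dim_L\mu$. This is precisely the kind of self-affine Ledrappier--Young identity developed in \cite{Barany, FalKemp}, which I would import as a black box, together with the exact-dimensionality of typical projections cited before Theorem~\ref{MainThm}. Granted that input, the corollary is a one-line consequence of Theorem~\ref{MainThm} and the trivial upper bound. The main obstacle, if one insisted on a self-contained proof, would be re-deriving that Ledrappier--Young identity in the positive-matrix self-affine setting; within the framework of this paper, however, it is already at hand.
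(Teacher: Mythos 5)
Your proposal is correct and follows essentially the same route as the paper: both reduce the corollary to the identity $\beta(\mu)=\min\{\dim_H\mu,1\}$ under $\dim_H\mu=\dim_L\mu$, combine it with the lower bound from Theorem \ref{MainThm} and the trivial Lipschitz upper bound, and import that identity from \cite{Barany,FalKemp}. The only point you gloss over is that \cite{FalKemp} proves the identity under the strong separation condition, so one must check (as the paper does, via inequality (4.2) of \cite[Lemma 4.2]{FalKemp}) that the implication in the needed direction survives without that assumption.
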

\begin{proof}
If we can show that
\begin{equation}\label{dimdrop}
\dim_H\mu=\dim_L\mu\implies \beta(\mu)=\min\{\dim_H \mu,1\}.
\end{equation}
the conclusion follows directly from Theorem \ref{MainThm}.
Under the additional assumption that the self-affine set satisfies the strong separation condition,  \eqref{dimdrop} along with the converse implication was established in \cite{FalKemp}.
However, the strong separation condition is not required for the implication \eqref{dimdrop}. This is because the left hand side of inequality (4.2)  of \cite[Lemma 4.2]{FalKemp} holds even without strong separation. 
\end{proof}

Theorem \ref{MainThm} can be applied to the projection of self-affine sets. The next corollary shows that if the set supports Bernoulli measures with dimensions approximating the affinity dimension   there is no dimension drop for projections of $E$ except possibly in one direction.

\begin{cor}\label{corproj}
Let $E$ be a planar self-affine set defined by an IFS \eqref{ifsmaps} with strictly positive matrices $A_i$. If there exists a sequence $(\mu_n)$ of Bernoulli measures supported on $E$ with $\dim_H \mu_n \to \dim_A E$ then, for all $\theta\in B$, 
\begin{equation}\label{cor3.3}
\dim_H\pi_{\theta}(E)=\min\{\dim_H E,1\}.
\end{equation}
\end{cor}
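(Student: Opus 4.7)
The plan is to apply Theorem \ref{MainThm} to each approximating Bernoulli measure $\mu_n$ in the given sequence and pass to the limit. The upper bound $\dim_H \pi_{\theta}(E) \leq \min\{\dim_H E, 1\}$ is immediate since $\pi_\theta$ is $1$-Lipschitz with one-dimensional image, so only the lower bound requires work. As a preliminary observation, the hypothesis $\dim_H \mu_n \to \dim_A E$ together with $\dim_H \mu_n \leq \dim_H E \leq \dim_A E$ (the $\mu_n$ being supported on $E$) forces $\dim_H E = \dim_A E$. Combined with the standard chain $\dim_H \mu_n \leq \dim_L \mu_n \leq \dim_A E$, we also learn that $\dim_L \mu_n \to \dim_A E$, so the gap $\dim_L \mu_n - \dim_H \mu_n$ tends to $0$.

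Next, since $\pi_\theta(\mu_n)$ is supported on $\pi_\theta(E)$, applying Theorem \ref{MainThm} to each $\mu_n$ yields, for every $\theta \in B$,
\[
\dim_H \pi_{\theta}(E) \;\geq\; \dim_H \pi_{\theta}(\mu_n) \;\geq\; \beta(\mu_n).
\]
Thus the corollary reduces to the claim that $\beta(\mu_n) \to \min\{\dim_H E, 1\}$ as $n\to\infty$. Should the approximating measures satisfy the exact equality $\dim_H \mu_n = \dim_L \mu_n$, the implication \eqref{dimdrop} established in the proof of Corollary 3.2 gives $\beta(\mu_n) = \min\{\dim_H \mu_n, 1\}$ outright, and letting $n\to\infty$ finishes the argument.

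The main obstacle is therefore the case when only approximate equality $\dim_H \mu_n \approx \dim_L \mu_n$ is at hand. Two natural routes present themselves. The first is to refine the sequence, replacing each $\mu_n$ by a nearby K\"aenm\"aki-type equilibrium measure which, under the positivity assumption on the matrices $A_i$, is known to satisfy $\dim_H = \dim_L$ and to approximate $\dim_A E$ in Hausdorff dimension; one would then apply Corollary 3.2 directly to the refined sequence. The second is to reexamine the inequality from \cite[Lemma 4.2]{FalKemp} invoked in the proof of Corollary 3.2: inspection shows its left-hand side controls $\beta(\mu)$ from below by an expression that depends continuously on $\dim_H\mu$, $\dim_L\mu$ and the Lyapunov exponents, yielding a quantitative bound $\beta(\mu_n) \geq \min\{\dim_H \mu_n, 1\} - \varepsilon_n$ with $\varepsilon_n\to 0$ as the gap shrinks. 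Either route gives $\liminf_n \beta(\mu_n) \geq \min\{\dim_H E, 1\}$, matches the upper bound, and completes the proof.
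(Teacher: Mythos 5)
Your overall architecture matches the paper's: apply Theorem \ref{MainThm} to each $\mu_n$ to get $\dim_H\pi_\theta(E)\geq\dim_H\pi_\theta(\mu_n)\geq\beta(\mu_n)$ for all $\theta\in B$, observe that the upper bound $\dim_H\pi_\theta(E)\leq\min\{\dim_H E,1\}$ is trivial, and reduce everything to showing $\beta(\mu_n)\to\min\{\dim_A E,1\}$. Your preliminary observations (that the hypothesis forces $\dim_H E=\dim_A E$ and that $\dim_L\mu_n-\dim_H\mu_n\to 0$ via $\dim_H\mu_n\leq\dim_L\mu_n\leq\dim_A E$) are correct.

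The gap is in the crux step, the convergence $\beta(\mu_n)\to\min\{\dim_A E,1\}$, which you do not actually establish. The paper obtains this equality in one line by citing \cite[Theorem 2.7]{Barany}, which gives the value of $\beta(\mu)$ for Bernoulli measures with strictly positive matrices without any hypothesis of the form $\dim_H\mu=\dim_L\mu$; only $\dim_H\mu_n\to\dim_A E$ is needed. Neither of your two substitute routes is carried out, and both have problems. Route one (replacing $\mu_n$ by nearby K\"aenm\"aki-type equilibrium measures) changes the hypothesis of the corollary and runs into the obstacle that such measures are Gibbs rather than Bernoulli, whereas Theorem \ref{MainThm} and Corollary 3.2 are stated for Bernoulli measures; making this work requires a recoding argument and a separate approximation claim that you would have to prove (the authors themselves wrestle with exactly this in a version of the corollary they ultimately abandoned). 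Route two asserts that ``inspection shows'' the inequality of \cite[Lemma 4.2]{FalKemp} yields a quantitative bound $\beta(\mu_n)\geq\min\{\dim_H\mu_n,1\}-\varepsilon_n$ with $\varepsilon_n\to 0$ as the dimension gap shrinks; this continuity statement is plausible but is precisely the content that needs proof, and you give no argument for it. As it stands the proposal reduces the corollary correctly but leaves its essential analytic input unproven.
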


\begin{proof}
Let $\mu_{n,F}$ denote the Furstenberg measure associated to $\mu_n$, and let $\beta_n$ denote the value of  $\dim_H \pi_{\theta}(\mu_n)$  that occurs for $\mu_{n,F}$-almost every $\theta$ by \cite[Proposition 3.3]{Barany}. 
Then Theorem \ref{MainThm} implies that for all $\theta\in B$,
\[
\dim_H \pi_{\theta}(E)\geq 
\lim_{n\to\infty}\beta_n = \min\{\dim_A E,1\},
\] 
this last equality holding because $\dim_H \mu_n\to \dim_A E$ and using \cite[Theorem 2.7]{Barany}.
Since for all $\theta$
$$\min\{\dim_A E,1\}\geq \min\{\dim_H E,1\} \geq \dim_H \pi_{\theta}(E),$$ 
\eqref{cor3.3} follows.
\end{proof}

It would be nice to replace the condition in Corollary \ref{corproj} on the existence of the sequence of Bernoulli measures by the requirement that $\dim_A E=\dim_H E$. However the relationship between the statements `$\dim_A E=\dim_H E$', `$E$ supports an ergodic measure $\mu$ with $\dim_H \mu=\dim_A E$' and `there exists a sequence $(\mu_n)$ of Bernoulli measures supported on $E$ with $\dim \mu_n \to \dim_A E$' is not clear. All three statements hold for almost all sets of translation vectors $(d_1\cdots d_k)$ whenever each $A_i$ has $||A_i||<\frac{1}{2}$, but the relationship between the statements is difficult to understand.

Note that our results require the matrices $A_i$ to have positive entries, so that the matrices all map the first quadrant into its interior. This assumption is used in various ways and shortens some of the proofs, but it is crucial in B\'ar\'any's proof \cite[Proposition 3.3]{Barany} that projections of self-affine measures are exact dimensional, which in turn is used in the proof of Theorem \ref{Thm2} on averages of $r$-scale entropies.

\subsection{Examples}
There are many families of measures on self-affine sets for which $\dim_H\mu=\dim_L\mu$ and hence for which our main theorem holds. In particular, this includes  classes of examples presented in Hueter and Lalley \cite{HueLal}, B\'ar\'any \cite{Barany} and Falconer and Kempton \cite{FalKemp}. The  corollary below shows that this situation also arises for almost all sets of translations in the affine maps in the IFS that defines $E$.

\begin{cor}
Let $\{T_i\}_{i=1}^k$ be an affine  IFS \eqref{ifsmaps} where the matrices  $A_i$ are  strictly positive with $\|A_i\| < \frac{1}{2}$ for all $i$. For each  translation vector ${\bf d}= (d_1,\cdots,d_k)\in \mathbb{R}^{2k}$ let $E_{\bf d}$ be the self-affine attractor thus defined. Then for almost all ${\bf d}\in \mathbb{R}^{2k}$ $($in the sense of $2k$-dimensional Lebesgue measure$)$
\[
\dim_H\pi_{\theta}(E_{\bf d})=\min\{\dim_H E_{\bf d},1\} = \min\{ \dim_A E_{\bf d},1\}
\] 
for all $\theta \in B$ simultaneously $($where as before $B$ is either $\mathbb{PR}^1$ or is $\mathbb{PR}^1$ with one angle omitted$)$.

\end{cor}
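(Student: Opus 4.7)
The plan is to combine the first corollary to Theorem \ref{MainThm} (the one asserting $\dim_H\pi_\theta(\mu)=\min\{\dim_H\mu,1\}$ for all $\theta\in B$ whenever $\dim_H\mu=\dim_L\mu$) with two classical generic dimension results: the Falconer--Solomyak theorem for self-affine sets and its refinement to measures due to Jordan--Pollicott--Simon. The crucial observation is that the affinity dimension $s:=\dim_A(A_1,\ldots,A_k)$ and the Lyapunov dimension $\dim_L\mu$ of any Bernoulli measure $\mu$ on $\Sigma$ are determined by the matrices $A_i$ and the probability vector alone and are therefore independent of ${\bf d}$; similarly, the set $B$ depends only on the $A_i$.

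First I would choose, independently of ${\bf d}$, a countable sequence $(\mu_n)_{n\geq 1}$ of Bernoulli measures on $\Sigma$ with $\dim_L\mu_n\leq 2$ for every $n$ and $\dim_L\mu_n\to\min\{s,2\}$. Such a sequence exists by the sub-additive variational principle of K\"aenm\"aki for the singular value potential: the supremum of $\dim_L\mu$ over ergodic measures equals $s$, and positivity of the $A_i$ makes the sub-additive pressure sufficiently regular on the simplex of Bernoulli probability vectors for this supremum to be approached along Bernoulli measures.

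Next I would invoke two generic dimension results. Using the hypothesis $\|A_i\|<\tfrac{1}{2}$, the Falconer--Solomyak theorem (\cite{FalconerAffinity1,Sol}) gives $\dim_H E_{\bf d}=\min\{s,2\}$ for Lebesgue-a.e.\ ${\bf d}\in\mathbb R^{2k}$, while its Jordan--Pollicott--Simon refinement gives, for each fixed Bernoulli measure on $\Sigma$ and a.e.\ ${\bf d}$, that the push-forward onto $E_{\bf d}$ has Hausdorff dimension $\min\{\dim_L\mu,2\}$. Intersecting the countably many full-measure sets supplied by the $(\mu_n)$ with the one from Falconer--Solomyak, I obtain a full-measure $\Omega\subset\mathbb R^{2k}$ on which, simultaneously, $\dim_H E_{\bf d}=\min\{s,2\}$ and $\dim_H\mu_n=\dim_L\mu_n$ for every $n$. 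For any ${\bf d}\in\Omega$ the first corollary to Theorem \ref{MainThm} then yields $\dim_H\pi_\theta(\mu_n)=\min\{\dim_L\mu_n,1\}$ for every $n$ and every $\theta\in B$. Taking the supremum over $n$ and using $\dim_L\mu_n\to\min\{s,2\}$ gives $\dim_H\pi_\theta(E_{\bf d})\geq\min\{s,1\}$, which matches the trivial upper bound $\dim_H\pi_\theta(E_{\bf d})\leq\min\{\dim_H E_{\bf d},1\}=\min\{s,1\}$. Both equalities in the conclusion then follow from $\min\{\dim_A E_{\bf d},1\}=\min\{s,1\}=\min\{\dim_H E_{\bf d},1\}$ on $\Omega$.

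The main obstacle is the Bernoulli approximation of the affinity dimension. K\"aenm\"aki's equilibrium measure for the singular value potential realises $\dim_L=s$ but is not Bernoulli in general; producing Bernoulli measures with $\dim_L$ arbitrarily close to $s$ requires regularity of the sub-additive pressure, and it is the positivity of the $A_i$ that makes this transparent. Once that step is granted, the remainder is a routine countable intersection of full-measure sets coming from already-established results.
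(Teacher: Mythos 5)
Your argument is correct in outline but reaches the conclusion by a genuinely different route from the paper. You route through the first corollary (the measure statement under $\dim_H\mu=\dim_L\mu$), obtaining that hypothesis generically from the Jordan--Pollicott--Simon refinement of the Falconer--Solomyak theorem, whereas the paper routes through Corollary \ref{corproj} (the set statement, which needs Bernoulli measures with $\dim_H\mu_n\to\dim_A E$) and, instead of citing an external genericity result for measures, reproves the relevant special case by hand: it writes down the $N$-block Bernoulli measure with weights $\mu([\a])=\phi^{s}(A_{\a})$ for $\a\in\Lambda^N$ (with $s$ chosen so these weights sum to $1$) and runs the energy-integral estimate of \cite[Theorem 5.3]{FalconerAffinity1} over ${\bf d}\in B$ to show $\dim_H\mu_{\bf d}\geq t$ for every $t<d_A$ and almost every ${\bf d}$. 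Your approach buys brevity by outsourcing the genericity step; the paper's buys self-containedness. The one place where you are materially vaguer than you should be is exactly the step you flag as the main obstacle: approximating $d_A$ (equivalently the supremum of $\dim_L$) by Bernoulli measures. Your phrasing suggests one-step Bernoulli measures on the simplex of probability vectors over $\Lambda$, and that is not enough in general -- even for positive matrices the K\"aenm\"aki equilibrium state is only quasi-Bernoulli, and the supremum of $\dim_L$ over one-step Bernoulli measures can fall short of $d_A$. The standard fix, and the one the paper implements explicitly, is to pass to Bernoulli measures on the recoded alphabet $\Lambda^N$ (using quasi-multiplicativity of $\phi^s$ coming from positivity, i.e.\ $\phi^s(A_{\a})\phi^s(A_{\b})\leq c\,\phi^s(A_{\a}A_{\b})$); these are Bernoulli for the iterated IFS with the same attractor and the same exceptional set $B$ (the Perron eigenvectors of $A_i^N$ coincide with those of $A_i$), so the corollaries still apply. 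With that clarification your proof goes through.
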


\begin{proof}
The right-hand equality follows since  $\dim_HE_{\bf d} = \min\{ \dim_A E_{\bf d},2\}$ for almost all ${\bf d}$, see \cite{FalconerAffinity1,Sol}.

For the left-hand equality we will show that  for all $\epsilon>0$ we can find a Bernoulli measure $\mu$ on $\Sigma$ that induces measures  $\mu_{\bf d}$ on the $E_{\bf d}$ with $\dim_H\mu_{\bf d}>\dim_H E_{\bf d}- \epsilon$ for almost all ${\bf d}$, so that the conclusion will follow from Corollary \ref{corproj}.

As the $A_i$ have strictly positive entries, there is a cone $K$ strictly inside the upper-right quadrant that is mapped into itself by all finite compositions of the $A_i$. This implies, see \cite[Lemma 3.1]{Shmer}, that there is a number $c\geq 1$ that depends only on $K$ such that for all finite words 
$\a=a_1\cdots a_j,\ \b=b_1\cdots b_{j'} \in \Sigma^*$,
$$ \|A_\a A_\b\| \leq  \|A_\a\| \| A_\b\| \leq c\,  \|A_\a A_\b\|.$$
Since $\phi^s(A) = \|A\|^{s} \ (0\leq s\leq 1)$ and  
$\phi^s(A) = \|A\|^{2-s} (\det A)^{s-1}\ (1\leq s \leq 2)$, it follows that 
\begin{equation}\label{phiin}
\phi^s(A_\a A_\b) \leq  \phi^s(A_\a) \phi^s( A_\b) \leq c \,  \phi^s(A_\a A_\b).
\end{equation}
for all $s\geq 0$.

Write $d_A:=  \dim_A (A_1,\cdots,A_k) \equiv  \dim_A E_{\bf d}$ for the affinity dimension. We may assume that $0< d_A \leq 2$ (if $d_A> 2$ then $E_{\bf d}$ has positive plane Lebesgue measure for almost all ${\bf d}$ and the result is clear). Let $0< t<d_A$ ($t$ not an integer). Using a simple estimate of the rate of decrease of  $\phi^{s}(A_\a)$ with $s$, see  \cite{FalconerAffinity1}, we may choose  an $N \in \mathbb{N}$ sufficiently large  to ensure that for some $0<\lambda<1$
\begin{equation}\label{phiratios}
c\ \frac{\phi^{d_A}(A_\a)}{\phi^{t}(A_\a)} < \lambda \qquad \mbox{for all } \a = a_1\cdots a_N \in \Lambda^N.
\end{equation}

From the definition of $d_A$ \eqref{dimaff1} and submultiplicative properties,   
$\inf_{n\in \mathbb{N}} \big(\sum_{|\a| = n} {\phi^{d_A}(A_\a)}\big)^{1/n} = 1$, so we may choose
$s\geq d_A$ such that 
\begin{equation}\label{sumone}
\sum_{\a \in \Lambda^N} {\phi^{s}(A_\a)}=\big(\sum_{\a \in \Lambda^N} {\phi^{s}(A_\a)}\big)^{1/N}= 1
\end{equation}
 using the continuity of these sums in $s$.

Let $\Sigma^*_N = \bigcup_{j=0}^\infty \Lambda^{jN}$.   We define a Bernoulli measure $\mu$ on $\Sigma$ by specifying $\mu$ on cylinders defined by words in  $\Sigma^*_N$:
$$\mu([\a_1\cdots \a_n]) = \phi^{s}(A_{\a_1})\phi^{s}(A_{\a_2})\cdots \phi^{s}(A_{\a_n})
\qquad (n \in \mathbb{N},\ \a_i \in \Lambda^{N}).$$
By \eqref{sumone} $\mu$ defines a Borel probability measure on $\Sigma= (\Lambda^{N})^\mathbb{N}$. Using \eqref{phiin} and \eqref{phiratios}
\begin{equation}
\frac{\mu([\a_1\cdots \a_n])}{\phi^t(A_{\a_1\cdots \a_n})}
\leq \frac{\phi^s(A_{\a_1})\cdots\phi^s(A_{\a_n})}{c^{1-n}\phi^t(A_{\a_1})\cdots\phi^t(A_{\a_n})}
\leq \frac{c^{n-1}\phi^{d_A}(A_{\a_1})\cdots\phi^{d_A}(A_{\a_n})}{\phi^t(A_{\a_1})\cdots\phi^t(A_{\a_n})} \leq c^{-1}\lambda^n. \label{comp}
\end{equation}


We now proceed as in the proof of \cite[Theorem 5.3]{FalconerAffinity1}. We represent points of $E_{\bf d} $ in the usual way by  $x_{\bf d}(\a) = \cap_{i=0}^\infty  T_{\a|_i}(D) $ for  $\a \in \Sigma$. We write $\mu_{\bf d}$ for the push-down of $\mu$  onto $E_{\bf d} $, given by $\mu_{\bf d}(F) = \mu\{\a: x_{\bf d}(\a) \in F\}$. 
We write
$\a\wedge\b$ for the common initial word of $\a,\b \in \Sigma$.   
 As in \cite[Theorem 5.3]{FalconerAffinity1}  we may bound the energy integral of $\mu_{\bf d}$ integrated over some disc $B \subset \mathbb{R}^{2k}$ by
\begin{eqnarray*}
\int_{{\bf d} \in B}\int_{x \in E_{\bf d}}\int_{y \in E_{\bf d}}
\frac{d\mu_{\bf d}(x) d\mu_{\bf d}(y)d{\bf d}}{|x-y|^t}
&= & 
\int_{{\bf d} \in B}\int_{\a \in \Sigma}\int_{\b \in \Sigma}
\frac{d\mu(\a) d\mu(\b) d{\bf d}}{|x_{\bf d}(\a)-x_{\bf d}(\b)|^t}\\
&\leq & 
c_1\int_{\a \in \Sigma}\int_{\b \in \Sigma}\phi^t(A_{\a\wedge \b})^{-1} d\mu(\a) d\mu(\b)\\
&\leq & 
c_1 \sum_{\p \in \Sigma^*}   \phi^t (A_{\p})^{-1}\mu([\p])^2\\
&\leq & 
c_1c_2 c_3\sum_{\q \in\Sigma^*_N}   \phi^t (A_{\q})^{-1} \mu([\q])^2.
\end{eqnarray*}
For the last inequality we have regarded each word $\p \in \Sigma^*$ as $\p = \q q_1\cdots q_j$ with $\q \in\Sigma^*_N$ and $0\leq j \leq N-1$, and used that  $\phi^t (A_{\p})^{-1}\mu([\p])^2 \leq c_2\phi^t (A_{\q})^{-1}\mu([\q])^2$ for some $c_2$ independent of $\p$, since the summands increase by a bounded factor on adding each single letter to a word $\q$. The constant 
$c_3 = \sum_{i=0}^{N-1} k^i$ is the number of words $\p \in \Sigma^*$ for which the summands are estimated by the summand of each $\q \in\Sigma^*_N$. Writing $c_4 = c_1c_2 c_3$ and using \eqref{comp}, 
\begin{eqnarray*}
\int_{{\bf d} \in B}\int_{x \in E_{\bf d}}\int_{y \in E_{\bf d}}
\frac{d\mu_{\bf d}(x) d\mu_{\bf d}(y)d{\bf d}}{|x-y|^t}
&\leq & 
c_4\sum_{j=0}^\infty\sum_{\q \in \Lambda^{jN}}   \phi^t (A_{\q})^{-1} \mu([\q]) \mu([\q])\\
&\leq & 
c_4c^{-1}\sum_{j=0}^\infty \lambda^j\sum_{\q \in \Lambda^{jN}}  \mu([\q]) \\
&= & 
c_4c^{-1}\sum_{j=0}^\infty \lambda^j \ < \ \infty.
\end{eqnarray*}
We conclude that for almost all ${\bf d}$,\  $  \int_{x \in E_{\bf d}}\int_{y \in E_{\bf d}}
|x-y|^{-t} d\mu_{\bf d}(x) d\mu_{\bf d}(y) <\infty$ implying that for $\mu_{\bf d}$-almost all $x \in E_{\bf d}$ there is a constant $c_0$ such that   $\mu_{\bf d} B(x,r) \leq c_0 r^t$ for all $r>0$, so $\dim\mu_{\bf d} \geq t$. This is the case for all $t<d_A$, as required. 
\end{proof}

\section{Proof of the Main Theorem}
\setcounter{equation}{0}
In this section we prove Theorem \ref{MainThm}. There are two intermediate results which have relatively technical proofs whose main ideas are already well-understood in other work. Since these proofs are quite long and somewhat tangential to the thrust of our argument, we defer them to the appendix.

We first recall a proposition from \cite{FalKemp} on the dynamics of projections of self-affine sets. Let $\pi_{\theta}:D\to[-1,1]$ denote orthogonal projection in direction $\theta$ onto the diameter of $D$ at angle perpendicular to $\theta$, where we identify this diameter (of length 2) with the line $[-1,1]$. 

\begin{prop}\label{SelfSimilarFamily}
For each $i\in\{1,\cdots,k\}, \theta\in\mathbb P\mathbb R^1$ there is a well defined linear contraction $f_{i,\theta}:[-1,1]\to[-1,1]$ given by
\[
f_{i,\theta}=\pi_{\theta}\circ T_i\circ \pi_{\phi_i(\theta)}^{-1}
\]
such that $\pi_{\theta}(T_i(A))=f_{i,\theta}(\pi_{\phi_i(\theta)}(A))$ for all Borel sets $A\subset D$.
\end{prop}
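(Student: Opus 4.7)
The proposition has three substantive parts: well-definedness of the single-valued map $f_{i,\theta}$, that it is affine with contraction ratio strictly less than one, and the commutation identity. The plan is to address them in that order, with the first being the actual content of the statement and the other two essentially bookkeeping.

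For well-definedness, the issue is that $\pi_{\phi_i(\theta)}^{-1}(x)$ is a chord of $D$, not a point. I would show that its image under $T_i$ is nevertheless contained in a single fibre of $\pi_\theta$. The chord has direction $\phi_i(\theta)$; by the very definition of $\phi_i$ as the projective action of $A_i^{-1}$ (which sends direction $\theta$ to direction $\phi_i(\theta)$), the matrix $A_i$ sends lines of direction $\phi_i(\theta)$ to lines of direction $\theta$, and translation by $d_i$ does not alter direction, so $T_i$ maps the chord to a line segment of direction $\theta$. The orthogonal projection $\pi_\theta$ collapses any such segment to a single point, so $f_{i,\theta}(x)$ is independent of the representative chosen in the fibre, and lies in $[-1,1]$ because $T_i(D)\subset D$.

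For the affine and contraction claims, I would pick an affine section $s:[-1,1]\to D$ of $\pi_{\phi_i(\theta)}$ (for instance the one valued in the diameter of $D$ perpendicular to $\phi_i(\theta)$, which is isometric onto its image), and rewrite $f_{i,\theta} = \pi_\theta\circ T_i\circ s$. This is a composition of affine maps, hence affine, and estimating its Lipschitz constant by sending the segment joining $s(x_1)$ and $s(x_2)$ through $T_i$ (which contracts lengths by at most $\|A_i\|$) and then through the $1$-Lipschitz map $\pi_\theta$ yields the bound $\|A_i\|<1$. The commutation identity $\pi_\theta(T_i(A)) = f_{i,\theta}(\pi_{\phi_i(\theta)}(A))$ then follows pointwise from well-definedness: for each $x\in A$, taking $x$ itself as the representative in $\pi_{\phi_i(\theta)}^{-1}(\pi_{\phi_i(\theta)}(x))$ gives $\pi_\theta(T_i(x)) = f_{i,\theta}(\pi_{\phi_i(\theta)}(x))$, and both set inclusions drop out.

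The only place I anticipate possible confusion is the bookkeeping of the direction dynamics, namely that $\phi_i$ is the projective action of $A_i^{-1}$ and not of $A_i$. It is precisely this convention that makes $A_i$ carry fibres of the inner projection $\pi_{\phi_i(\theta)}$ onto fibres of the outer projection $\pi_\theta$, and thus neutralises the apparent multi-valuedness of $\pi_{\phi_i(\theta)}^{-1}$; once this is clear the rest of the argument is mechanical.
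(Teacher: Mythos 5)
Your proof is correct, and the key point is exactly the right one: the convention $\phi_i=A_i^{-1}/\sim$ means $A_i$ carries the fibres of $\pi_{\phi_i(\theta)}$ (chords of $D$ in direction $\phi_i(\theta)$) onto segments in direction $\theta$, i.e.\ into single fibres of $\pi_\theta$, which kills the apparent multi-valuedness of $\pi_{\phi_i(\theta)}^{-1}$; the affine, contraction and commutation claims then follow as you say. Note that the paper itself gives no proof of this proposition but simply recalls it from \cite{FalKemp}, and your argument is the standard one given there.
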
 
This identity represents $\pi_{\theta}(E)$ as a union of scaled-down copies of projections of $E$ in other directions.

We let $\Sigma^*$ denote the space of all finite words made by concatenating the letters $\{1,\cdots,k\}$. We think of $\Sigma^*$ as a tree, letting words $a_1\cdots a_{n+1}$ be children of the parent node $a_1\cdots a_n$. Given a direction $\theta$ in which we want to project $E$, we define a length function $|.|_{\theta}$ on the tree $\Sigma^*$ by declaring the length of the path from the root to the node $a_1\cdots a_n$ to be
\[
|a_1\cdots a_n|_{\theta}=-\log_2\left(\frac{|\pi_{\theta}(D_{a_1\cdots a_n})|}{2}\right).
\]
Since $|\pi_{\theta}(D)|=2$, the division by $2$ inside the log is necessary to ensure that the length of the empty word is zero.

\begin{lemma}
The length function $|.|_{\theta}$ satisfies
\[
|a_1\cdots a_{n+1}|_{\theta}-|a_1\cdots a_n|_{\theta}=|a_{n+1}|_{\phi_{a_n\cdots a_1}(\theta)}.
\]
\end{lemma}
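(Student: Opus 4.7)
The plan is to apply Proposition \ref{SelfSimilarFamily} inductively to express each of the two projected ellipse-images as an iterated composition of linear contractions, note that compositions of linear maps have a well-defined product contraction ratio, and then read off the telescoped ratio.

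First, I would introduce the shorthand $\theta_0 := \theta$ and $\theta_j := \phi_{a_j}(\theta_{j-1})$ for $j = 1,\ldots,n$, so that $\theta_n = \phi_{a_n}\circ\phi_{a_{n-1}}\circ\cdots\circ\phi_{a_1}(\theta) = \phi_{a_n\cdots a_1}(\theta)$, matching the notational convention $T_{a_1\cdots a_n}=T_{a_1}\circ\cdots\circ T_{a_n}$ but with the order reversed (as is natural, since we track the direction forward through successive pre-compositions). Then I would apply Proposition \ref{SelfSimilarFamily} once to peel off $T_{a_1}$ and iterate, obtaining for any Borel $A \subset D$ the identity
\[
\pi_\theta(T_{a_1\cdots a_n}(A)) \;=\; f_{a_1,\theta_0} \circ f_{a_2,\theta_1} \circ \cdots \circ f_{a_n,\theta_{n-1}}\bigl(\pi_{\theta_n}(A)\bigr).
\]
Call the outer composition $F$. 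Applying this with $A = D$ gives $\pi_\theta(D_{a_1\cdots a_n}) = F([-1,1])$, while applying it with $A = T_{a_{n+1}}(D) = D_{a_{n+1}}$ gives $\pi_\theta(D_{a_1\cdots a_{n+1}}) = F(\pi_{\theta_n}(D_{a_{n+1}}))$.

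Since each $f_{i,\vartheta}$ is a linear contraction of $[-1,1]$, so is the composition $F$; in particular $F$ is affine with some contraction ratio $\rho\in(-1,1)\setminus\{0\}$, and hence $|F(I)| = |\rho|\cdot|I|$ for every interval $I\subset[-1,1]$. Taking the ratio of the two lengths, the common factor $|\rho|$ cancels and we obtain
\[
\frac{|\pi_\theta(D_{a_1\cdots a_{n+1}})|}{|\pi_\theta(D_{a_1\cdots a_n})|} \;=\; \frac{|\pi_{\theta_n}(D_{a_{n+1}})|}{|[-1,1]|} \;=\; \frac{|\pi_{\theta_n}(D_{a_{n+1}})|}{2}.
\]
Taking $-\log_2$ of both sides converts the ratio on the left into the difference $|a_1\cdots a_{n+1}|_\theta - |a_1\cdots a_n|_\theta$ and identifies the right-hand side as $|a_{n+1}|_{\theta_n} = |a_{n+1}|_{\phi_{a_n\cdots a_1}(\theta)}$, which is the claim.

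The only real obstacle is bookkeeping: one must make sure that each iterative application of Proposition \ref{SelfSimilarFamily} is in the correct direction and that the accumulated $\phi$-composition matches the paper's convention $\phi_{a_n\cdots a_1}$. The substantive ingredients — linearity of $f_{i,\vartheta}$ (so that ratios of interval lengths are preserved by any prefix composition $F$) and the identity $D_{a_1\cdots a_{n+1}} = T_{a_1\cdots a_n}(D_{a_{n+1}})$ — require no additional work.
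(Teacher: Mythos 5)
Your proof is correct and is essentially the paper's argument written out in more detail: the paper likewise reduces the ratio $|\pi_\theta(D_{a_1\cdots a_{n+1}})|/|\pi_\theta(D_{a_1\cdots a_n})|$ to $|\pi_{\phi_{a_n\cdots a_1}(\theta)}(D_{a_{n+1}})|/2$ by applying the linear map $T_{a_1\cdots a_n}^{-1}$ together with Proposition \ref{SelfSimilarFamily}, which is exactly the cancellation of your prefix composition $F$. The bookkeeping of the $\phi$-composition order in your version matches the paper's convention.
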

\begin{proof}
Identically, 
\[
\log_2\left(\frac{|\pi_{\theta}(D_{a_1\cdots a_{n+1}})|}{2}\right)=\log_2\left(\frac{|\pi_{\theta}(D_{a_1\cdots a_n})|}{2}\right)+\log_2\left(\frac{|\pi_{\theta}(D_{a_1\cdots a_{n+1}})|}{|\pi_{\theta}(D_{a_1\cdots a_n})|}\right).
\]
Applying the linear map $T_{a_1\cdots a_n}^{-1}$ to $D_{a_1\cdots a_n}$ and using Proposition \ref{SelfSimilarFamily},
\begin{align*}
\log_2\left(\frac{|\pi_{\theta}(D_{a_1\cdots a_{n+1}})|}{|\pi_{\theta}(D_{a_1\cdots a_n})|}\right)&=\log_2\left(\frac{|\pi_{\phi_{a_n\cdots a_1}}(D_{a_{n+1}})|/2}{|\pi_{\phi_{a_n\cdots a_1}}(D)|/2}\right)\\
&= \log_2\left(\frac{|\pi_{\phi_{a_n\cdots a_1}}(D_{a_{n+1}})|}{2}\right)
\end{align*}
since $|\pi_{\theta}(D)|/2=1$ for all $\theta$.
\end{proof}
 
For each $\underline a\in\Sigma$ and $N\in\mathbb N$ we define $n_j=n_j(\underline a, N,\theta)$ to be the natural number satisfying
\begin{equation}\label{nk}
|a_1\cdots a_{n_j-1}|_{\theta}< Nj\leq|a_1\cdots a_{n_j}|_{\theta}.
\end{equation}

We define the infinite tree $\Sigma_N=\Sigma_N(\theta)\subset (\Sigma^*)^{\mathbb N}$ to be the tree with nodes at level $j$ labeled by words $a_1\cdots a_{n_j}$, and edges between level $j-1$ nodes and level $j$ nodes labelled by $a_{n_{j-1}+1}\cdots a_{n_j}$. 

A word $\underline a\in\Sigma$ can also be regarded as an element of $\Sigma_N$ in which the first symbol is $a_1\cdots a_{n_1}$, the second is $a_{n_1+1}\cdots a_{n_2}$ etc. When the distinction between $\Sigma$ and $\Sigma_N$ is important we specify which space $\underline a$ is in. The measure $\mu$ on $\Sigma$ may also be regarded as a measure on $\Sigma_N$, which we also denote by $\mu$.

We consider the metric $d_N$ on the tree $\Sigma_N$ given by
\[
d_N(\underline a,\underline b)=2^{-\max\{j:a_1\cdots a_{n_j}=b_1\cdots b_{n_j}\}}.
\] 
This metric depends on $\theta$ through the definition of the sequences $a_{n_j}, b_{n_j}$. For all sequences $\underline a,\underline b\in \Sigma$, and the corresponding sequences in $\Sigma_N$, 
$$
|\pi_{\theta}(\underline a)-\pi_{\theta}(\underline b)|\leq |\pi_{\theta}(D_{\underline a\wedge\underline b})|
\leq d_N(\underline a,\underline b)
$$
for all $N$. In particular the map $\pi_{\theta}:(\Sigma_N,d_N)\mapsto[-1,1]$ is Lipschitz. 

Let $\mu_{[a_1\cdots a_n]}:=\mu|_{[a_1\cdots a_n]}/\mu[a_1\cdots a_n]$. We state a variant of results of Hochman and Shmerkin \cite{HochShmerProj} that we will apply to the projections.

\begin{theorem}\label{HSThm}
If
\begin{equation}\label{eq0}
\lim_{N\to\infty} \liminf_{n\to\infty}\frac{1}{N\log 2}\frac{1}{n}\sum_{i=1}^n H_{2^{-(i+1)N}}\big(\pi_{\theta}(\mu_{[a_1\cdots a_{n_i}]})\big)\geq C
\end{equation}
for $\mu$-almost every $\underline a\in\Sigma_N$ then $\dim_H \pi_{\theta}(\mu)\geq C$.
\end{theorem}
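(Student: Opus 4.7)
The plan is to adapt the dimension-from-entropy framework of Hochman and Shmerkin \cite{HochShmerProj} to the tree $\Sigma_N$. Writing $\nu := \pi_\theta(\mu)$ and $r_i := 2^{-iN}$, the hypothesis \eqref{eq0} says that along a $\mu$-typical branch of $\Sigma_N$ the normalised $r_{i+1}$-scale entropies of the level-$i$ projected cylinder components $\pi_\theta(\mu_{[a_1\cdots a_{n_i}]})$ average to at least $C$. The aim is to promote this averaged component-entropy bound first to a global entropy bound on $\nu$ and then to $\dim_H \nu \geq C$.

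First I would reformulate $H_r$ as entropy with respect to the dyadic partition $\mathcal{D}_r$ of $[-1,1]$ of mesh $r$; the two agree up to a universal additive constant. By the defining inequality \eqref{nk} of $n_i$, each projected component $\pi_\theta(\mu_{[w]})$, $w\in\Lambda^{n_i}$, is supported in an interval of length comparable to $r_i$, and the strict positivity of the $A_i$ forces the family of such intervals to have bounded overlap multiplicity at scale $r_i$ (so each $\mathcal{D}_{r_i}$-cell meets only boundedly many of them). Then for each $i$, decomposing $\nu = \sum_{w \in \Lambda^{n_i}} \mu[w]\, \pi_\theta(\mu_{[w]})$ and combining the entropy chain rule for the refinement $\mathcal{D}_{r_{i+1}} \succ \mathcal{D}_{r_i}$ with the concavity of entropy yields a one-step increment bound
\[
H_{r_{i+1}}(\nu) \ -\ H_{r_i}(\nu)\ \geq\ \sum_{w \in \Lambda^{n_i}} \mu[w]\, H_{r_{i+1}}\bigl(\pi_\theta(\mu_{[w]})\bigr)\ -\ O(1),
\]
with error uniform in $i$ and $N$. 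Telescoping over $i=1,\dots,n$, dividing by $(n+1)N\log 2$, and applying Fatou's lemma to the pointwise hypothesis \eqref{eq0} (justified by the uniform upper bound $H_{r_{i+1}}(\pi_\theta(\mu_{[w]})) \leq N\log 2 + O(1)$ arising from the component support lengths) would then give $\liminf_{r \to 0} H_r(\nu)/(-\log r) \geq C$.

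The final step is to upgrade this lower entropy-dimension bound to $\dim_H \nu \geq C$. Since $\pi_\theta(\mu)$ is not a priori exact-dimensional for arbitrary $\theta \in B$, a simple Fatou-type comparison is too weak; instead I would rerun the increment argument localised at each tree node $a_1\cdots a_{n_i}$, exploiting the shift-invariance of the Bernoulli measure $\mu$ to obtain the same asymptotic lower bound at every rescaling around a $\nu$-typical point, forcing $\underline{d}(\nu, x) \geq C$ at $\nu$-almost every $x$ and hence $\dim_H \nu \geq C$. The main obstacles are the bounded-overlap estimate underpinning the second step, which rests on positive-matrix cone contraction in the same spirit as the Furstenberg-measure arguments deferred to the appendix, and this final localisation, which follows the standard Hochman--Shmerkin technique but requires careful projection-theoretic bookkeeping at every scale.
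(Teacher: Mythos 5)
Your first two steps are essentially sound: the decomposition $\nu=\sum_{w\in\Lambda^{n_i}}\mu[w]\,\pi_\theta(\mu_{[w]})$, the fact that each component is supported in an interval of length at most $2\cdot 2^{-iN}$ (immediate from \eqref{nk}), and concavity of conditional entropy do give your increment bound and hence, after integrating \eqref{eq0} over branches via Fatou, a lower bound $\liminf_{r\to 0}H_r(\nu)/(-\log r)\geq C$ on the \emph{entropy dimension} of $\nu=\pi_\theta(\mu)$. (Incidentally, the ``bounded overlap multiplicity'' you invoke is not needed for this step and is false in general: no separation condition is assumed and projections of distinct cylinders can coincide; only the diameter bound on each component is used.)

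The genuine gap is the final step. Since $H_r(\nu)/(-\log r)=\int\frac{\log\nu(B_r(x))}{\log r}\,d\nu(x)$, Fatou gives entropy dimension $\geq\dim_H\nu$, i.e.\ the comparison you need points the \emph{wrong} way: a measure can have entropy dimension $\geq C$ while $\dim_H\nu<C$, because the scales at which the local dimension drops can vary with the point $x$. So everything rests on your ``localisation'', and the sketch omits the two ingredients that make such a localisation work. First, converting branchwise \emph{averaged} component entropies into an almost-everywhere bound on $\liminf_{r\to0}\frac{\log\nu(B_r(x))}{\log r}$ requires a martingale/large-deviations argument --- this is exactly the content of the local entropy averages theorem \cite[Theorem 4.4]{HochShmerProj}; shift-invariance of $\mu$ is no substitute, and in any case the projection direction rotates as one descends the branch. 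Second, and more seriously, $\pi_\theta$ does not carry the tree structure of $\Sigma_N$ to a nested partition of $[-1,1]$: the ball $B_r(\pi_\theta(x))$ receives mass from many cylinders far from the one coding $x$, so controlling the components along the branch through $x$ does not control $\nu(B_r(x))$. The paper's proof addresses precisely this by factoring $\pi_\theta=h_N\circ g_N$ through an overlapping symbolic coding $Y_N$ of $[-1,1]$, where $g_N$ is a tree morphism (to which \cite[Theorem 4.4]{HochShmerProj} applies, giving a genuine Hausdorff dimension bound for $g_N(\mu)$) and $h_N$ is $3$-faithful (so distorts Hausdorff dimension by only $C_2/(N\log2)$, by \cite[Proposition 5.2]{HochShmerProj}). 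Without some version of this faithfulness/coding step your argument does not close.
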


This statement is essentially contained in Theorems 4.4 and 5.4 of \cite{HochShmerProj}.  However a little work is required to translate those theorems to our setting and the proof is given in the appendix.

The rest of the proof of Theorem \ref{MainThm} depends on studying the limits in (\ref{eq0}). Let 
\[\lambda_{\max}:=\max\{|a_i|_{\theta}:i\in\{1,\cdots,k\}, \theta\in\mathbb P\mathbb R^1\}.\]
 
\begin{lemma}\label{entest1}
For all $\underline a\in\Sigma$, all  $i, N\in\mathbb N$ and all $\theta \in \mathbb P\mathbb R^1$,
\begin{equation}\label{entest}
H_{2^{-(i+1)N}}(\pi_{\theta}(\mu_{[a_1\cdots a_{n_i}]}))\geq H_{2^{\lambda_{\max}}2^{-N}}(\pi_{\phi_{a_{n_i}\cdots a_1}(\theta)}(\mu)).
\end{equation}
\end{lemma}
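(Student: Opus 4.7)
The strategy is to reduce the entropy on the left-hand side to an entropy of the projection of $\mu$ itself (in another direction), by exploiting the fact that $\pi_{\theta}(\mu_{[a_1\cdots a_{n_i}]})$ is, up to a linear change of coordinates, a copy of $\pi_{\phi_{a_{n_i}\cdots a_1}(\theta)}(\mu)$.

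The plan is as follows. First I would iterate Proposition \ref{SelfSimilarFamily} along the word $a_1\cdots a_{n_i}$. This gives
\[
\pi_{\theta}\circ T_{a_1\cdots a_{n_i}}
 \;=\; f_{a_1,\theta}\circ f_{a_2,\phi_{a_1}(\theta)}\circ\cdots \circ f_{a_{n_i},\phi_{a_{n_i-1}\cdots a_1}(\theta)}\circ \pi_{\phi_{a_{n_i}\cdots a_1}(\theta)}
 \;=\; L\circ \pi_{\phi_{a_{n_i}\cdots a_1}(\theta)},
\]
where $L:[-1,1]\to[-1,1]$ is a linear map (a composition of linear contractions). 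Since $L([-1,1])$ is the interval $\pi_{\theta}(D_{a_1\cdots a_{n_i}})$, the contraction ratio of $L$ equals $|\pi_{\theta}(D_{a_1\cdots a_{n_i}})|/2 = 2^{-|a_1\cdots a_{n_i}|_{\theta}}$. Because $\mu$ is Bernoulli, pushing forward $\mu_{[a_1\cdots a_{n_i}]}$ by the coding map agrees with pushing forward $\mu$ by $T_{a_1\cdots a_{n_i}}$, so
\[
\pi_{\theta}(\mu_{[a_1\cdots a_{n_i}]}) \;=\; L_{*}\bigl(\pi_{\phi_{a_{n_i}\cdots a_1}(\theta)}(\mu)\bigr).
\]

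The next step is the elementary fact that pushing a measure forward by a linear map simply rescales the entropy scale. If $\nu$ is a probability measure on $[-1,1]$ and $L$ is linear with contraction ratio $r>0$, then $L^{-1}(B_\epsilon(L(x))) = B_{\epsilon/r}(x)$, so a change of variables gives $H_\epsilon(L_{*}\nu) = H_{\epsilon/r}(\nu)$. Applying this with $\epsilon = 2^{-(i+1)N}$ and $r = 2^{-|a_1\cdots a_{n_i}|_{\theta}}$ yields
\[
H_{2^{-(i+1)N}}\bigl(\pi_{\theta}(\mu_{[a_1\cdots a_{n_i}]})\bigr) \;=\; H_{2^{|a_1\cdots a_{n_i}|_{\theta}-(i+1)N}}\bigl(\pi_{\phi_{a_{n_i}\cdots a_1}(\theta)}(\mu)\bigr).
\]

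Finally I would use the defining inequality \eqref{nk} for $n_i$, together with the previous lemma, which implies $|a_1\cdots a_{n_i}|_\theta \leq |a_1\cdots a_{n_i-1}|_\theta + \lambda_{\max} < Ni + \lambda_{\max}$. Hence the scale on the right satisfies $2^{|a_1\cdots a_{n_i}|_{\theta}-(i+1)N} < 2^{\lambda_{\max}}\, 2^{-N}$. Since $H_\epsilon(\nu)$ is non-increasing in $\epsilon$ (smaller balls carry less mass, giving a larger integrand), we conclude
\[
H_{2^{-(i+1)N}}\bigl(\pi_{\theta}(\mu_{[a_1\cdots a_{n_i}]})\bigr) \;\geq\; H_{2^{\lambda_{\max}}2^{-N}}\bigl(\pi_{\phi_{a_{n_i}\cdots a_1}(\theta)}(\mu)\bigr),
\]
which is the desired inequality. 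No real obstacle arises; the only point requiring care is keeping track that the linear map $L$ in the first step genuinely has contraction ratio $2^{-|a_1\cdots a_{n_i}|_{\theta}}$, which is precisely how the length function $|\cdot|_{\theta}$ on the tree was defined.
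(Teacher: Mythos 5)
Your proposal is correct and follows essentially the same route as the paper: identify $\pi_{\theta}(\mu_{[a_1\cdots a_{n_i}]})$ as the image of $\pi_{\phi_{a_{n_i}\cdots a_1}(\theta)}(\mu)$ under a linear contraction of ratio $2^{-|a_1\cdots a_{n_i}|_{\theta}}$, use invariance of $H_\epsilon$ under simultaneous rescaling of measure and scale, bound $|a_1\cdots a_{n_i}|_{\theta}\in[iN,iN+\lambda_{\max}]$ via \eqref{nk}, and finish by monotonicity of $\epsilon\mapsto H_\epsilon$. You merely spell out more explicitly (via iterating Proposition \ref{SelfSimilarFamily} and the Bernoulli property) why the rescaling identity holds, which the paper asserts directly.
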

\begin{proof}

Note that
\[
\pi_{\theta}(\mu_{[a_1\cdots a_{n_i}]})= S(\pi_{\phi_{a_{n_i}\cdots a_1}(\theta)}(\mu)),
\]
where $S$ is the map on measures on $[-1,1]$ induced by a linear map on $[-1,1]$ which contracts by a factor  $2^{-|a_1\cdots a_n|_{\theta}}$. An entropy $H_{\lambda}(\tau)$ is unchanged by rescaling the measure $\tau$ provided that we rescale the parameter $\lambda$ by the same amount. From the definitions of $\lambda_{\max}$ and $n_k$,
\[
|a_1\cdots a_{n_i}|_{\theta}\in[iN,iN+\lambda_{\max}].
\] 

Then
\begin{eqnarray*}
 H_{2^{-(i+1)N}}(\pi_{\theta}(\mu_{[a_1\cdots a_{n_i}]}))&=& H_{\frac{2^{-(i+1)N}}{2^{-|a_1\cdots a_{n_i}|_{\theta}}}}(\pi_{\phi_{a_{n_i}\cdots a_1}(\theta)}(\mu))\\
&=& H_{\rho.2^{-N}}(\pi_{\phi_{a_{n_i}\cdots a_1}(\theta)}(\mu)).
\end{eqnarray*}
where the `error' $\rho$ is given by
\[
\rho=\frac{2^{iN}}{2^{-|a_1\cdots a_{n_i}|_{\theta}}}\in[1, 2^{\lambda_{\max}}].
\]
As \eqref{nk}  $\rho\leq 2^{\lambda_{\max}}$,  where $\lambda_{max}$ is independent of $N$, and the entropy $H_{\lambda}(\mu)$ is monotone decreasing in $\lambda$, we obtain \eqref{entest}.
\end{proof}

We need to pass from equidistribution results with respect to the sequence $(\phi_{a_n\cdots a_1}(\theta))_{n=1}^{\infty}$ to corresponding results for the subsequence $(\phi_{a_{n_i}\cdots a_1}(\theta))_{i=1}^{\infty}$.

 
\begin{prop}\label{SuspensionProp}
There exists a measure $\nu_F$ on $\mathbb{PR}^1$ which is equivalent to $\mu_F$ and such that for all $\theta\in B$, $N\in\mathbb N$ and $\mu$-almost all $\underline a\in\Sigma$, the sequence $(\phi_{a_{n_i}\cdots a_1}(\theta))_{i=1}^{\infty}$ equidistributes with respect to $\nu_F$, i.e. for all intervals $A\subset \mathbb{PR}^1$ we have
\[
\lim_{I\to\infty}\frac{1}{I}\Big|\big\{i\in\{1,\cdots, I\}:\phi_{a_{n_i}\cdots a_1}(\theta)\in A\big\}\Big|=\nu_F(A).
\]
\end{prop}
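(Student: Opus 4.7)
The plan is to realise the thinned sequence $\bigl(\phi_{a_{n_i}\cdots a_1}(\theta)\bigr)_{i \geq 1}$ as the base projection of a suspension flow built over the natural skew-product dynamics, so that $\nu_F$ emerges as a weighted, and hence equivalent, version of $\mu_F$.

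First I would consider the skew product $T \colon X \to X$ on $X = \Sigma \times \mathbb{PR}^1$ defined by $T(\underline a, \theta) = (\sigma\underline a, \phi_{a_1}(\theta))$. Stationarity of $\mu_F$ under the random walk makes $\mu \times \mu_F$ a $T$-invariant probability measure, and the strict contraction of each $\phi_i$ on the negative quadrant (guaranteed by positivity of the $A_i$) gives ergodicity of $(T, \mu\times\mu_F)$ via the standard Furstenberg--Kifer argument. Setting $g(\underline a, \theta) := |a_1|_\theta$, the preceding lemma identifies the Birkhoff sums $S_n = \sum_{m=0}^{n-1} g\circ T^m(\underline a,\theta)$ with $|a_1\cdots a_n|_\theta$; moreover $g$ is bounded above by $\lambda_{\max}$ and below by a positive constant (by $\|A_i\| < 1$ and positivity).

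Next I would build the suspension flow $(Y, \Phi_t, \nu)$ over $(T, \mu\times\mu_F)$ with roof $g$, whose invariant measure is $\nu = \bar g^{-1} (\mu\times\mu_F) \otimes \mathrm{Leb}$ with $\bar g = \int g \,d(\mu\times\mu_F)$. Starting from a base point $(\underline a, \theta, 0)$, the flow's base coordinate is constant at $T^{n-1}(\underline a,\theta)$ for $t \in [S_{n-1}, S_n)$; in particular, at flow time $t = iN$ the base point is $T^{n_i - 1}(\underline a,\theta)$, whose $\mathbb{PR}^1$-coordinate is $\phi_{a_{n_i-1}\cdots a_1}(\theta)$. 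A single application of $\phi_{a_{n_i}}$ then produces the desired $\phi_{a_{n_i}\cdots a_1}(\theta)$; this one-symbol shift is harmless and can be absorbed by reformulating the Poincar\'e section, or equivalently by pushing $\nu_F$ forward by one random $\phi_i$ step (the resulting measure remains equivalent to $\mu_F$ by what follows). Applying Birkhoff's theorem to the time-$N$ map of the flow yields equidistribution of these base points according to the projection of $\nu$ onto $X$, which has density $g/\bar g$ relative to $\mu\times\mu_F$. Projecting onto $\mathbb{PR}^1$ gives
\[
\nu_F(d\theta) \;=\; \frac{w(\theta)}{\bar g}\,d\mu_F(\theta), \qquad w(\theta) := \sum_{i=1}^k \mu([i])\,|i|_\theta.
\]
Because each $|i|_\theta$ is bounded above by $\lambda_{\max}$ and below by $-\log_2 \alpha_1(A_i) > 0$ uniformly in $\theta$, the weight $w$ is bounded above and below by positive constants, so $\nu_F$ is equivalent to $\mu_F$.

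Finally, to upgrade from $(\mu\times\mu_F)$-a.e.\ starting point to \emph{every} $\theta \in B$ with $\mu$-a.e.\ $\underline a$, I would invoke the strict contraction of the $\phi_i$ on $\mathbb{PR}^1$ away from the possible common eigendirection: for any two $\theta, \theta' \in B$ driven by the same $\underline a$, the orbits $\bigl(\phi_{a_n\cdots a_1}(\theta)\bigr)$ and $\bigl(\phi_{a_n\cdots a_1}(\theta')\bigr)$ collapse exponentially fast to a common limit, so equidistribution at $\mu_F$-a.e.\ $\theta'$ transfers to every $\theta \in B$. The main obstacle is the discrete-versus-continuous sampling step: Birkhoff naturally supplies averages along the continuous flow, while we need averages at the specific instants $t = iN$. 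Ergodicity of the time-$N$ map $\Phi_N$ is not automatic from ergodicity of $\Phi_t$, but it can be checked directly from the aperiodicity of the cocycle $S_n$ (guaranteed by non-constancy of $g$ together with the Bernoulli nature of $\mu$), or else bypassed by a short-window averaging argument using the uniform bound $\lambda_{\max}$ on $g$ to control the difference between the $iN$-sampled sum and the continuous-time integral up to an error that vanishes when divided by~$I$.
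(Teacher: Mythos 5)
Your overall architecture coincides with the paper's: the skew product $T(\underline a,\theta)=(\sigma\underline a,\phi_{a_1}(\theta))$, the suspension flow with roof $g(\underline a,\theta)=|a_1|_\theta$, sampling at the times $t=iN$, and the strict contraction of the $\phi_i$ on $\mathcal Q_2$ to pass from $(\mu\times\mu_F)$-a.e.\ starting data to every $\theta\in B$ (the paper does this via a uniform continuity lemma for $\theta\mapsto|a_1\cdots a_n|_\theta$ plus a shadowing lemma for the flow). Your explicit computation of $\nu_F$ as $w(\theta)\,d\mu_F(\theta)/\bar g$ with $w(\theta)=\sum_i\mu([i])\,|i|_\theta$ is a concrete form of the paper's $\nu_F=\big((\mu\times\mu_F\times\mathcal L)|_W\big)\circ\pi_2^{-1}$, and the two-sided bounds on $w$ correctly give equivalence with $\mu_F$. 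You also rightly note the harmless one-symbol discrepancy between $\phi_{a_{n_i-1}\cdots a_1}(\theta)$ and $\phi_{a_{n_i}\cdots a_1}(\theta)$, which the paper glosses over.

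The gap is exactly at the point you flag but do not resolve: ergodicity of the time-$N$ map $\psi_N$. Neither of your two suggested fixes works as stated. Non-constancy of $g$ together with Bernoullicity of $\mu$ does \emph{not} give the required aperiodicity: if $g$ were cohomologous to a function taking values in $c\mathbb Z$ with $N/c$ rational, the flow would carry an eigenfunction $e^{2\pi i t/c}$ fixed by $\psi_N$, so $\psi_N$ would fail to be ergodic even though $g$ is far from constant. The short-window averaging bypass also fails: sampling a flow along the arithmetic progression $t=iN$ cannot in general be compared to the continuous-time average with an error that is $o(I)$ --- for a suspension with constant roof $N$ the time-$N$ map is the identity and the sampled averages never converge to the space average, and a roof merely bounded by $\lambda_{\max}$ does not exclude this kind of resonance. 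The paper resolves this with an argument you would need to supply: by a Livschitz-theorem result of Quas and Soo, either no $c>0$ has all periods of periodic orbits in $c\mathbb Z$, in which case the flow is weak mixing and every time-$N$ map is ergodic; or such a $c$ exists, the roof is cohomologous to a $c\mathbb Z$-valued function, and $\psi_N$ is ergodic because $x\mapsto x+N \pmod c$ is ergodic when $c$ is irrational; and if $c$ happens to be rational, one replaces $\log_2$ by $\log_3$ in the definition of $|\cdot|_\theta$ to force $c$ irrational. Without an argument of this kind your proof does not go through.
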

The proof of this proposition is relatively long but involves only standard ergodic theory so is postponed to the appendix.

We now study the sums in \eqref{eq0}.
\begin{theorem}\label{Thm2}
For all $\theta\in B$ and $\mu$-almost every $\underline a\in\Sigma$,
\begin{equation}\label{thm2ineq}
\lim_{N\to\infty}\liminf_{n\to\infty}\frac{1}{N\log 2}\frac{1}{n}\sum_{i=1}^n H_{2^{-(i+1)N}}(\pi_{\theta}(\mu_{[a_1\cdots a_{n_i}]}))\geq \beta(\mu).
\end{equation}

\end{theorem}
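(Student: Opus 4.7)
The plan is to reduce the Birkhoff-type average in \eqref{thm2ineq}, via equidistribution and exact dimensionality, down to the single number $\beta(\mu)$, in three steps. First, I would apply Lemma \ref{entest1} term-by-term: setting
\[
F_N(\psi) := H_{2^{\lambda_{\max}-N}}\bigl(\pi_\psi(\mu)\bigr) \qquad\text{and}\qquad \psi_i := \phi_{a_{n_i}\cdots a_1}(\theta),
\]
each summand $H_{2^{-(i+1)N}}(\pi_\theta(\mu_{[a_1\cdots a_{n_i}]}))$ becomes bounded below by $F_N(\psi_i)$. Since $\pi_\psi(\mu)$ is a probability measure on $[-1,1]$, a trivial ball cover gives $0\le F_N(\psi)\le (N-\lambda_{\max})\log 2+O(1)$, so $F_N\in L^\infty(\nu_F)$ with sup-norm growing linearly in $N$.

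Second, I would apply Proposition \ref{SuspensionProp} to replace the Birkhoff-type average of $F_N$ along the orbit $(\psi_i)$ by the integral $\int F_N\,d\nu_F$. The proposition is stated as weak-$*$ equidistribution on intervals, so transferring it to the (possibly only measurable) observable $F_N$ will be the delicate point: I expect to handle it either by strengthening the equidistribution to $L^1$ convergence via the ergodic theorem that appears in the proof in the appendix, or by noting that the map $\psi\mapsto\pi_\psi(\mu)$ is weak-$*$ continuous and $\pi_\psi(\mu)$ has no atoms for $\nu_F$-a.e.\ $\psi$, so that $F_N$ is continuous off a $\nu_F$-null set and the Portmanteau theorem applies, or by approximating $F_N$ monotonically from below by continuous functions and using the non-negativity of the entropy. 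Any of these routes yields, for $\mu$-almost every $\underline{a}$,
\[
\liminf_{n\to\infty}\frac{1}{n}\sum_{i=1}^n F_N(\psi_i) \;\ge\; \int_{\mathbb{PR}^1} F_N\,d\nu_F.
\]

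Third, I would let $N\to\infty$. By \cite[Proposition 3.3]{Barany}, $\pi_\psi(\mu)$ is exact dimensional of dimension $\beta(\mu)$ for $\mu_F$-almost every $\psi$, hence for $\nu_F$-almost every $\psi$ since the two measures are equivalent; so \eqref{exactent} gives
\[
\frac{F_N(\psi)}{N\log 2} \;\longrightarrow\; \beta(\mu) \qquad \text{for } \nu_F\text{-a.e. } \psi.
\]
The integrands being non-negative, Fatou's lemma will deliver $\liminf_{N\to\infty}(N\log 2)^{-1}\int F_N\,d\nu_F\ge \beta(\mu)$, and chaining the three estimates gives \eqref{thm2ineq}. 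The main obstacle, flagged in step two, is the extension of the equidistribution statement from intervals to the not-obviously-continuous observable $F_N$; once this is granted, the remaining ingredients (exact dimensionality and Fatou) fit together cleanly.
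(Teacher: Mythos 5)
Your proposal is correct and follows essentially the same route as the paper: Lemma \ref{entest1} term-by-term, then Proposition \ref{SuspensionProp} to pass from the Birkhoff-type average to $\int H_{2^{-(N-\lambda_{\max})}}(\pi_\alpha(\mu))\,d\nu_F(\alpha)$, then exact dimensionality via \cite[Proposition 3.3]{Barany} and a limit--integral interchange. The one step you flag as delicate is resolved in the paper by observing that $\alpha\mapsto H_{2^{-N}}(\pi_\alpha(\mu))$ is lower semi-continuous (so the Portmanteau-type inequality applies directly to the equidistributing sequence), and the paper uses dominated convergence where you use Fatou --- both suffice since only the lower bound is needed.
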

\begin{proof}
By Lemma \ref{entest1}, for  $\theta\in B$ and 
 $\mu$ almost every $\underline a\in\Sigma$,
\begin{align}\label{Ineq1}
\liminf_{n\to\infty}\frac{1}{N\log 2}\frac{1}{n}\sum_{i=1}^n & H_{2^{-(i+1)N}}(\pi_{\theta}(\mu_{[a_1\cdots a_{n_i}]}))\nonumber\\
&\geq  \liminf_{n\to\infty} \frac{1}{N\log 2}\frac{1}{n}\sum_{i=1}^n H_{2^{-(N-\lambda_{\max})}}(\pi_{\phi_{a_{n_i}\cdots a_1}(\theta)}(\mu))\nonumber\\ 
&\geq  \frac{1}{N\log 2}\int_{\mathbb P\mathbb R^1} H_{2^{-(N-\lambda_{\max})}}(\pi_{\alpha}(\mu)) d\nu_F(\alpha).
\end{align}
where the second inequality holds because the function
\[
\alpha\mapsto H_{2^{-N}}(\pi_{\alpha}(\mu))
\]
is lower semi-continuous in $\alpha$ and the sequence $(\phi_{a_{n_i}\cdots a_1}(\theta))_{i=1}^{\infty}$ equidistributes with respect to $\nu_F$ by Proposition \ref{SuspensionProp}. 

For $\mu_F$ almost every $\alpha$ the measure $\pi_{\alpha}(\mu)$ is exact dimensional with dimension $\beta(\mu)$, and since $\mu_F$ and $\nu_F$ are equivalent the same is true for $\nu_F$ almost all $\alpha$. By \eqref{exactent}
\begin{equation}\label{emtbet}
\lim_{N\to\infty}\frac{1}{(N-\lambda_{\max})\log 2} H_{2^{-(N-\lambda_{\max})}}(\pi_{\alpha}(\mu))=\beta(\mu)
\end{equation}
for $\nu_F$ almost every $\alpha$.

Noting that $\lim_{N\to\infty}\frac{N}{N-\lambda_{max}}=1$, we may integrate \eqref{emtbet} over $\theta$ to get
\[
\int_{\mathbb P\mathbb R^1}\lim_{N\to\infty} \frac{1}{N\log 2} H_{2^{-(N-\lambda_{\max})}}(\pi_{\alpha}(\mu)) d\nu_F(\alpha)=\beta(\mu).
\]
Since this integrand is bounded, we may interchange the limit and integral  by the dominated convergence theorem. Combining this with inequality (\ref{Ineq1}) gives \eqref{thm2ineq}.\end{proof}

Putting together Theorems \ref{HSThm} and \ref{Thm2} we conclude that, for all $\theta\in B$,
\[
\dim_H\pi_{\theta}(\mu)\geq \lim_{N\to\infty}\lim_{n\to\infty}\frac{1}{N\log 2}\frac{1}{n}\sum_{i=1}^n H_{2^{-(i+1)N}}(\pi_{\theta}(\mu_{[a_1\cdots a_{n_i}]}))\geq \beta(\mu)
\]
which completes the proof of Theorem \ref{MainThm}.

\section{Appendix: Technical Proofs}
\setcounter{equation}{0}
\subsection{Proof of Theorem \ref{HSThm}}
We use the terms `tree morphism' and `faithful map' as defined in \cite{HochShmerProj}: essentially a tree morphism is a map from one sequence space to another which preserves the metric and the structure of cylinder sets, and a faithful map is a map (in our case from a sequence space to $[-1,1]$) which does not distort dimension too much. The interested reader can find technical definitions of these terms, along with many lemmas of Hochman and Shmerkin which we use but do not write out in full, in \cite{HochShmerProj}.

\begin{proof}
First we define a space $Y_N$ and split the projection $\pi_{\theta}:\Sigma_N\to[-1,1]$ into a tree-morphism $g_N:\Sigma_N\to Y_N$ which is easier to deal with than $\pi_{\theta}$ itself, and a faithful map $h_N:Y_N\to[-1,1]$.

Let $Y_N:=\{-2^{N+1},\cdots 2^{N+1}-1\}^{\mathbb N}$ be equipped with metric $d_N$. Associate to each $i\in\{-2^{N+1},\cdots,2^{N+1}-1\}$ an interval $I_i$ of length $2^{-N}$ with left endpoint at $\frac{i}{2^{N+1}}$. This gives an overlapping covering of $[-1,1]$, with each point $x\in[0,1]$ contained in either two or three intervals $I_i$. Let $S_i:[-1,1]\to[-1,1]$ be the linear contraction mapping $[-1,1]$ onto $I_i$, and let $h_N:Y_N\to [-1,1]$ be given by
\[
h_N(\underline a)=\lim_{n\to\infty} S_{a_1}\circ\cdots\circ S_{a_n}(0)\qquad (\underline a \in \Sigma).
\] 
The map $h_N$ is 3-faithful in the sense of \cite{HochShmerProj} since for each point $x$ in each interval $h_N[a_1\cdots a_n]$ there are at most three values of $a_{n+1}\in \{0,\cdots, 2^{N+1}-1\}$ for which $x\in h_N[a_1\cdots a_{n+1}]$. 
By \cite[Proposition 5.2]{HochShmerProj}  there exists a constant $C_2$, independent of $N$, such that for every measure $\nu$ on $Y_N$
\begin{equation}\label{HS5.2}
\dim_H \nu-\frac{C_2}{N\log 2}\leq\dim_H h_N(\nu).
\end{equation}

We now define a map $g_N:\Sigma_N\to Y_N$ which maps depth-$n$ cylinders in $\Sigma_N$ to depth-$n$ cylinders in $Y_N$.
Note that, by the arrangement of the intervals $I_i$, for each interval $A\subset[-1,1]$ of radius less than $2^{-(j+1)N}$ there is a choice of $b_1\cdots b_j$ such that $A\subset h_N[b_1\cdots b_j]$. Furthermore, for every $A'\subset A$ of radius less than $2^{-(n+1)N}$ where $n>j$, there is an extension $b_{j+1}\cdots b_n$ such that $A'\subset h_N[b_1\cdots  b_j b_{j+1}\cdots b_n]$.

We define $g_N$ iteratively. First we choose for each depth-$2$ cylinder $[a_1\cdots a_{n_2}]\subset \Sigma_N$ a depth-1 cylinder $[b_1]\subset Y_N$ such that $\pi_{\theta}(D_{a_1\cdots a_{n_2}})\subset h_N([b_1])$. 
Then for each depth-$(j+1)$ cylinder $[a_1\cdots a_{n_{j+1}}]\subset \Sigma_N$, having already chosen a depth-$(j-1)$ cylinder $[b_1\cdots b_{j-1}]$ corresponding to word $a_1\cdots a_{n_j}$, we choose a letter $b_j$ such that 
\[
\pi_{\theta}(D_{a_1\cdots a_{n_{j+1}}})\subset h_N([b_1\cdots b_j]).
\]
This process defines a tree morphism $g_N: \Sigma_N\to Y_N$ such that $\pi_{\theta}:\Sigma_N\to[-1,1]$ can be written $\pi_{\theta}=h_N\circ g_N$.

Since $h_N$ is faithful and $\pi_{\theta}=h_N\circ g_N$, 
\[
\big|H_{2^{-(i+1)N}}\big(\pi_{\theta}(\mu_{[a_1\cdots a_{n_i}]})\big)-H_{2^{-(i+1)N}}\big(g_N(\mu_{[a_1\cdots a_{n_i}]})\big)\big|
\]
is bounded above by some constant which is independent of $N$. In particular, inequality \eqref{eq0} holds if and only if
\[
\lim_{N\to\infty} \liminf_{n\to\infty}\frac{1}{N\log 2}\frac{1}{n}\sum_{i=1}^n H_{2^{-(i+1)N}}(g_N(\mu_{[a_1\cdots a_{n_i}]}))\geq C.
\]
But  since $g_N$ is a tree morphism, \cite[Theorem 4.4]{HochShmerProj} gives
\[
\lim_{N\to\infty} \dim_H g_N(\mu)\geq C,
\]
so setting $\nu= g_N(\mu)$ in (\ref{HS5.2}),
\[
\dim_H \pi_{\theta}(\mu)\geq C.
\]
\end{proof}

\subsection{Proof of Proposition \ref{SuspensionProp}}
We begin by defining a map $T:\Sigma\times\mathbb {PR}^1\to\Sigma\times\mathbb{PR}^1$ by
\[
T(\underline a,\theta)=(\sigma(\underline a), \phi_{a_1}(\theta)).
\]
We can map the two-sided full shift $\Sigma^{\pm}$ onto $\Sigma\times \mathbb {PR}^1$ by mapping two-sided sequence $\underline a^{\pm}$ to $(a_1a_2a_3\cdots, \lim_{n\to\infty} \phi_{a_0\cdots a_{-n}}(0))$. Then $T$ is a factor of the right shift $\sigma^{-1}$ under this factor map, and the two-sided extension $\overline{\mu}$ of $\mu$ is mapped to $\mu\times\mu_F$. Since factors of mixing dynamical systems are mixing,  the system $(\Sigma\times\mathbb{PR}^1,\mu\times\mu_F,T)$ is mixing.
The map $T$ helps us understand the sequence $(\phi_{a_{n_i}\cdots a_1}(\theta))_{i=1}^{\infty}$, but we need to know about the sequence $n_i=n_i(\underline a,\theta,N)$. For this we build a suspension flow over $T$ by introducing a notion of time. 

Consider the space 
\[
W:=\{(\underline a,\theta,t)\in \Sigma\times\mathbb{PR}^1\times\mathbb R :0\leq t\leq |a_1|_{\theta}\}
\]
where the points $(\underline a,\theta,|a_1|_{\theta})$ and $(\sigma(\underline a),\phi_{a_1}(\theta),0)$ are identified. Define a suspension semiflow $\psi$ on $W$ by letting
\[
\psi_s(\underline a,\theta,t)=(\underline a,\theta,t+s)
\]
for $0\leq t+s\leq |a_1|_{\theta}$, and extending this to a well-defined semiflow for all $s>0$, using the identification $(\underline a,\theta,|a_1|_{\theta})=(\sigma(\underline a),\phi_{a_1}(\theta),0)$. This flow preserves the measure $(\mu\times\mu_F\times\mathcal L)|_{W}$, where $\mathcal L$ is Lebesgue measure. Since suspension semiflows over ergodic maps are ergodic,  $\psi_s$ is ergodic.

We now prove some uniform continuity lemmas involving $\theta, \theta' \in  \mathcal Q_2$, which also give non-uniform continuity for $\theta, \theta' \in  B$, since all $\theta\in B$ are eventually mapped into $\mathcal Q_2$ by compositions of the $\phi_i$.

We first show that small variations of $\theta$ have little effect on the time taken to flow through the base a given number of times.

\begin{lemma}
There exists a constant $C$ such that for all $\theta,\theta'\in\mathcal Q_2, n\in\mathbb N$ and $\underline a\in\Sigma$,
\[
|a_1\cdots a_n|_{\theta}-|a_1\cdots a_{n}|_{\theta'}<C|\theta-\theta'|
\]
\end{lemma}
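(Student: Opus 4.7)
The plan is to bound the $\theta$-derivative of $\theta \mapsto |a_1\cdots a_n|_\theta$ uniformly over $n$, $\underline a$, and $\theta\in\mathcal Q_2$; the lemma follows by integration, with the asymmetric one-sided form in which it is stated being an immediate consequence of the two-sided Lipschitz bound.

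First I would express the length function explicitly in terms of the linear part. Writing $A = A_{a_1\cdots a_n}$ and $v_\theta = (-\sin\theta,\cos\theta)$, the support function of the ellipse $A(D)$ in direction $v_\theta$ equals $\|A^T v_\theta\|$, so
\[
|\pi_\theta(D_{a_1\cdots a_n})| = 2\|A^T v_\theta\| \qquad\mbox{and hence}\qquad |a_1\cdots a_n|_\theta = -\log_2\|A^T v_\theta\|.
\]
A direct differentiation together with Cauchy--Schwarz then yields
\[
\left|\frac{d}{d\theta}|a_1\cdots a_n|_\theta\right| \;=\; \frac{1}{\log 2}\cdot\frac{|\langle A^T v_\theta,\, A^T u_\theta\rangle|}{\|A^T v_\theta\|^2} \;\leq\; \frac{1}{\log 2}\cdot\frac{\|A^T u_\theta\|}{\|A^T v_\theta\|},
\]
where $u_\theta = (\cos\theta,\sin\theta)$ is the unit vector perpendicular to $v_\theta$.

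Second, I would bound this ratio uniformly in $A$ and in $\theta\in\mathcal Q_2$. The numerator is at most $\alpha_1(A)$. For the denominator, the SVD gives $\|A^T v_\theta\|^2 \geq \alpha_1(A)^2\,\langle u_1, v_\theta\rangle^2$ where $u_1$ is the dominant left singular vector of $A$, i.e.\ the Perron eigenvector of the strictly positive symmetric matrix $AA^T$. The cone-contraction argument of \cite[Lemma 3.1]{Shmer} (applied to the $A_i^T$, which also have strictly positive entries) produces a compact cone $\mathcal C$ strictly inside the open positive quadrant, independent of $n$ and $\underline a$, that contains $u_1$ for every finite product $A = A_{a_1\cdots a_n}$. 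For $\theta\in\mathcal Q_2$ the perpendicular $v_\theta$ lies in a fixed compact arc of the closed positive quadrant which is transverse to $\mathcal C$ in the sense that the angle between any member of $\mathcal C$ and any such $v_\theta$ stays bounded away from $\pi/2$. Consequently $\langle u_1, v_\theta\rangle \geq c$ for some $c>0$ independent of $A$ and of $\theta \in \mathcal Q_2$, so $\|A^T v_\theta\| \geq c\,\alpha_1(A)$ and the ratio is bounded by $1/c$.

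Combining these two steps, the derivative is bounded by $C := 1/(c\log 2)$ uniformly in $n$, $\underline a$, and $\theta\in\mathcal Q_2$, and integrating from $\theta'$ to $\theta$ along the arc in $\mathcal Q_2$ gives $\bigl||a_1\cdots a_n|_\theta - |a_1\cdots a_n|_{\theta'}\bigr| \leq C|\theta-\theta'|$. The main obstacle I expect is the uniform transversality claim: that the Perron cone $\mathcal C$ of the matrices $A_{a_1\cdots a_n}A_{a_1\cdots a_n}^T$ and the set of perpendicular directions to $\mathcal Q_2$ remain separated uniformly in $n$ and $\underline a$. This is a direct consequence of strict positivity of the $A_i$ together with standard Birkhoff / Hilbert-metric cone-contraction estimates, but is the only part of the argument that genuinely uses the positive-matrix hypothesis.
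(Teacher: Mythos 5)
Your argument is correct, but it is genuinely different from the one in the paper. The paper never writes the length function in closed form: it uses the preceding lemma to decompose $|a_1\cdots a_n|_{\theta}$ as the Birkhoff-type sum $\sum_{i=1}^{n}|a_i|_{\phi_{a_{i-1}\cdots a_1}(\theta)}$ of single-letter lengths along the $\phi$-orbit of $\theta$, bounds the Lipschitz constant of each single-letter map $\theta\mapsto|a_i|_{\theta}$, and then exploits the fact that the $\phi_i$ are uniform contractions on $\mathcal Q_2$ (derivative at most $\rho<1$) so that $|\phi_{a_{i-1}\cdots a_1}(\theta)-\phi_{a_{i-1}\cdots a_1}(\theta')|\leq\rho^{\,i-1}|\theta-\theta'|$ and the geometric series $\sum_i\rho^{i-1}$ converges; positivity of the $A_i$ enters only through the contraction of the projective maps on $\mathcal Q_2$. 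You instead bound the derivative of the full length function $-\log_2\|A_{a_1\cdots a_n}^Tv_\theta\|$ directly, which requires the uniform estimate $\|A^Tv_\theta\|\geq c\,\alpha_1(A)$; your transversality argument for this is sound, since the Perron eigenvector $u_1$ of $AA^T$ satisfies $u_1=\alpha_1^{-2}AA^Tu_1\in A_{a_1\cdots a_n}(\mathcal Q_1)$ and so lies in a fixed compact subcone of the open positive quadrant, while $v_\theta$ stays in the closed positive quadrant for $\theta\in\mathcal Q_2$. (One small quibble: the invariant cone you need is the one for the $A_i$ themselves, via $u_1\in A(\mathcal Q_1)$, rather than for the $A_i^T$; both cones exist, so nothing breaks.) The paper's route is more in the spirit of the cocycle formalism it has already set up and reuses the displacement lemma verbatim, whereas yours is more self-contained and geometric, yields an explicit constant $C=1/(c\log 2)$, and as a by-product establishes the useful comparability $|\pi_\theta(D_{a_1\cdots a_n})|\asymp\alpha_1(A_{a_1\cdots a_n})$ uniformly over $\theta\in\mathcal Q_2$, which is closely related to the bounded-distortion estimates used elsewhere in the argument.
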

\begin{proof}
For each $i\in\{1,\cdots,k\}$, the map $\theta\mapsto |a_i|_{\theta}$ is differentiable with derivative bounded above by 2, since sin and cos have derivatives bounded by $1$. Furthermore, there exists a constant $\rho<1$ such that the maps $\phi_i$ restricted to $\mathcal Q_2$ are strict contractions with derivative bounded above by $\rho$.\begin{eqnarray*}
|a_1\cdots a_n|_{\theta}-|a_1\cdots a_{n}|_{\theta'}&\leq & \sum_{i=1}^{n} \big||a_i|_{\phi_{a_{n-1}\cdots a_1}(\theta)}-|a_i|_{\phi_{a_{n-1}\cdots a_1}(\theta')}\big|   \\
&\leq& \big(2\max_{i,\theta''} |a_i|_{\theta''}\big)\sum_{i=1}^{n} \big|\phi_{a_{n-1}\cdots a_1}(\theta)-\phi_{a_{n-1}\cdots a_1}(\theta')\big|\\
&\leq & \big(2\max_{i,\theta''} |a_i|_{\theta''}\big) |\theta-\theta'|\Big(\sum_{i=1}^{n} \rho^{i-1}\Big)\\ 
&<& C|\theta-\theta'|
\end{eqnarray*}
for all $\theta,\theta'\in\mathcal Q_2$.
\end{proof}

\begin{lemma}\label{time}
Suppose that the orbit under $\psi$ of point $(\underline a,\theta,t)$ equidistributes with respect to the measure $(\mu\times\mu_F\times\mathcal L)|_{W}$. Then the same is true of $(\underline a,\theta',t')$ for all $\theta'\in \mathcal Q_2$ and $t'\in[0, |a_1|_{\theta'}]$.
\end{lemma}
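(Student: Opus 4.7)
The plan is to show that the orbit $\{\psi_s(\underline a,\theta',t')\}_{s\ge 0}$ asymptotically tracks a constant time-shift of $\{\psi_s(\underline a,\theta,t)\}_{s\ge 0}$, so that the two orbits have identical long-time averages against any continuous test function on the compact space $W$.

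Using the estimate from the proof of the previous lemma, the partial sums $|a_1\cdots a_n|_\theta-|a_1\cdots a_n|_{\theta'}=\sum_{i=1}^{n}\bigl(|a_i|_{\phi_{a_{i-1}\cdots a_1}(\theta)}-|a_i|_{\phi_{a_{i-1}\cdots a_1}(\theta')}\bigr)$ are Cauchy at geometric rate (each summand is bounded by $2\rho^{i-1}|\theta-\theta'|$), so the limit $L:=\lim_{n\to\infty}(|a_1\cdots a_n|_\theta-|a_1\cdots a_n|_{\theta'})$ exists. Put $r:=L+t'-t$ and write $\tau_n:=|a_1\cdots a_n|_\theta-t$, $\tau_n':=|a_1\cdots a_n|_{\theta'}-t'$ for the $n$-th shift times of the two orbits; then $|\tau_n-\tau_n'-r|\le C\rho^n$ for some constant $C$ depending only on $\theta,\theta'$. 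Define the exceptional set $\mathcal B:=\bigcup_{n\ge 1}[\tau_n'-C\rho^n,\tau_n'+C\rho^n]$. For $s\in[\tau_n',\tau_{n+1}')\setminus\mathcal B$ one verifies $s+r\in[\tau_n,\tau_{n+1})$, and the explicit formula
$$\psi_u(\underline a,\theta,t)=\bigl(\sigma^n\underline a,\,\phi_{a_n\cdots a_1}(\theta),\,t+u-|a_1\cdots a_n|_\theta\bigr)\text{ for }u\in[\tau_n,\tau_{n+1}],$$
together with its analogue in $(\theta',t')$, shows that at such $s$ the $\Sigma$-coordinates of $\psi_s(\underline a,\theta',t')$ and $\psi_{s+r}(\underline a,\theta,t)$ coincide, the $\mathbb{PR}^1$-coordinates differ by at most $\rho^n|\theta-\theta'|$, and the $t$-coordinates differ by $L-(|a_1\cdots a_n|_\theta-|a_1\cdots a_n|_{\theta'})=O(\rho^n)$. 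Hence the two points converge to each other in $W$ as $s\to\infty$ through $[0,\infty)\setminus\mathcal B$.

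Since $\mathcal B$ has total Lebesgue measure at most $2C\sum_{n\ge 1}\rho^n<\infty$, its relative density in $[0,S]$ vanishes as $S\to\infty$. Combined with the uniform continuity and boundedness of any continuous $f:W\to\mathbb R$, this gives
$$\lim_{S\to\infty}\Bigl(\tfrac{1}{S}\!\int_0^S\! f(\psi_s(\underline a,\theta',t'))\,ds-\tfrac{1}{S}\!\int_0^S\! f(\psi_{s+r}(\underline a,\theta,t))\,ds\Bigr)=0.$$
A change of variable absorbs an $O(1/S)$ boundary contribution and reduces the second average to $\tfrac{1}{S}\int_0^S f(\psi_s(\underline a,\theta,t))\,ds$, which by hypothesis tends to $\int f\,d(\mu\times\mu_F\times\mathcal L)|_W$. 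Therefore the orbit of $(\underline a,\theta',t')$ equidistributes with respect to the same measure.

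The main subtlety I expect is pinning down the exceptional intervals around each shift time $\tau_n'$: outside them the two flow positions differ only by an exponentially contracting $\mathbb{PR}^1$-perturbation and a vanishing $t$-perturbation, while within a neighbourhood of width $|\tau_n-\tau_n'-r|$ the two orbits sit in different cylinders of $\Sigma$ and $f$ can change by $O(\|f\|_\infty)$. The key estimate is the geometric summability $\sum_n|\tau_n-\tau_n'-r|<\infty$, which comes from the uniform contraction of the $\phi_i$ on $\mathcal Q_2$; once this is in hand the rest is a routine shadowing-and-Birkhoff-style transfer of equidistribution from $(\underline a,\theta,t)$ to $(\underline a,\theta',t')$.
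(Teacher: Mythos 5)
Your proof is correct and follows essentially the same route as the paper's (much terser) argument: the $T$-orbits of $(\underline a,\theta)$ and $(\underline a,\theta')$ converge to each other because the $\phi_i$ contract uniformly on $\mathcal Q_2$, and the time distortion between the two flow orbits is controlled by the geometric-series estimate from the preceding lemma. You usefully supply the details the paper omits — in particular that the roof-sum differences not only stay bounded but converge to a limit $L$, so the second orbit shadows a fixed time-translate $\psi_{\cdot+r}$ of the first outside an exceptional set of finite Lebesgue measure, which is exactly what is needed to transfer Birkhoff averages of continuous test functions.
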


\begin{proof}
The distance $|T^n(\underline a,\theta)-T^n(\underline a,\theta')|\to0$ as $n\to\infty$ since the maps $\phi_i$ are strict contractions on $\mathcal Q_2$. All we need to check is that there is not too much time distortion when we replace the transformation $T$ with the flow $\psi$. But this has been dealt with by the previous lemma. 
\end{proof}

Given a flow $\psi_s$ on $W$, the {\it time-$N$ map} is the map $\psi_N$ on $W$ regarded as a discrete time dynamical system.

\begin{lemma}
The time N-map $\psi_N:W\to W$ preserves the measure $(\mu\times\mu_F\times\mathcal L)|_{W}$ and $(W, \psi_N, (\mu\times\mu_F\times\mathcal L)|_{W})$ is ergodic.
\end{lemma}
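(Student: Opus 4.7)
The invariance of $(\mu\times\mu_F\times\mathcal L)|_W$ under $\psi_N$ is immediate from the construction: the semiflow $\psi_s$ was designed to preserve this measure for every $s\ge 0$, so in particular $\psi_N$ does. The substance of the lemma is ergodicity, and my plan is to obtain it by proving the stronger property that the flow $\psi_s$ is weakly mixing, since then the time-$s$ map is weakly mixing, and a fortiori ergodic, for every $s>0$.

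To establish weak mixing of the suspension, I would invoke the Kakutani--Ambrose criterion: $\psi_s$ is weakly mixing iff $T$ is ergodic and, for every $\beta\ne 0$, there is no measurable $f\colon\Sigma\times\mathbb{PR}^1\to S^1$ satisfying the twisted coboundary relation
\[
f(T(\a,\theta))=e^{i\beta r(\a,\theta)}f(\a,\theta),\qquad r(\a,\theta):=|a_1|_\theta.
\]
Mixing of $T$ has already been noted in the excerpt, so everything reduces to excluding such $f$ for $\beta\ne 0$, equivalently to showing that $\beta r$ is not a $T$-coboundary modulo $2\pi\mathbb Z$ (``non-arithmeticity'' of the roof).

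The main obstacle is this non-arithmeticity step, where the positivity assumption on the $A_i$ does the work. I would exploit three ingredients: (i) because each $\phi_i$ contracts a cone in $\mathcal Q_2$ strictly into itself, $\mu_F$ is non-atomic with support a Cantor-like attractor $F\subset\mathcal Q_2$; (ii) for each symbol $i$, the map $\theta\mapsto|i|_\theta$ is continuous and non-constant on $F$; (iii) the uniform continuity estimate for $|a_1\cdots a_n|_\theta$ in $\theta$ (the lemma preceding Lemma~\ref{time}) gives uniform control on differences of Birkhoff sums of $r$ started at nearby $\theta$. A coboundary relation $\beta r=g-g\circ T+2\pi\mathbb Z$ would, upon iteration and use of the exponential contraction of the $\phi_i$ on the $\theta$-coordinate, force $\theta\mapsto r(\a,\theta)$ to be essentially constant on $F$, contradicting (ii). This rules out twisted eigenfunctions, yielding weak mixing of $\psi_s$ and hence ergodicity of $\psi_N$.
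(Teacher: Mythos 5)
Your reduction of the lemma to weak mixing of the flow $\psi_s$ is a genuinely different strategy from the paper's, and it contains a gap at exactly the point you flag as the ``main obstacle''. The paper does \emph{not} claim that $\psi_s$ is always weakly mixing. Following Quas--Soo, it splits into cases according to the group generated by the periods of periodic orbits: if that group is $c\mathbb Z$ for some $c>0$, the flow has a nontrivial eigenvalue and is \emph{not} weakly mixing, yet the time-$N$ map can still be ergodic. In that case the paper uses a Livschitz-theorem argument to replace the roof function by a cohomologous one taking values in $c\mathbb Z$, and then deduces ergodicity of $\psi_N$ from mixing of the base $T$ together with ergodicity of $x\mapsto x+N\ (\mathrm{mod}\ c)$ for irrational $c$ (arranging irrationality of $c$, if necessary, by switching the base of the logarithm in the roof function, which rescales all periods by the irrational factor $\log 2/\log 3$). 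Only in the remaining case, where the periods generate a dense subgroup, does the paper invoke weak mixing.

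The specific step that fails in your argument is the claim that a measurable solution of the twisted equation $f\circ T=e^{i\beta r}f$ would force $\theta\mapsto r(\underline a,\theta)$ to be essentially constant on $F$. A twisted coboundary relation places no such constraint: writing $f=e^{ig}$, it says $\beta r=g-g\circ T$ modulo $2\pi\mathbb Z$, and the transfer term $g-g\circ T$ can absorb arbitrary $\theta$-dependence of $r$. The genuine obstruction extracted from such an equation is that the Birkhoff sums of $r$ over \emph{periodic orbits} must lie in $\frac{2\pi}{\beta}\mathbb Z$, and ruling this out requires exhibiting periodic orbits whose periods are rationally independent --- something that non-constancy of $\theta\mapsto|i|_\theta$ on $F$ does not provide, and which may in fact fail for particular choices of the $A_i$. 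If it does fail, the flow is not weakly mixing, your claimed conclusion is false, and yet the lemma still holds via the paper's arithmetic-case argument. So your plan would need either a proof of non-arithmeticity (which is not available in this generality) or the case analysis that the paper actually carries out. The invariance part of your argument is fine.
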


\begin{proof}
First we deal with the case when there exists an irrational number $c>0$ such that every periodic orbit of the suspension flow has period equal to an integer multiple of $c$. Then it was argued in \cite[Proposition 5]{QuasSoo}, using the Livschitz Theorem, that the roof function of our flow is cohomologous to a function which always takes values equal to an integer multiple of $c$. We add this coboundary, which has no effect on the dynamics of the flow or the time $N$ map, and assume now that our roof function always takes value equal to an integer multiple of $c$. Now using the fact that the base map $T$ of our suspension flow is mixing, coupled with the fact that the map $x\mapsto x+N$ $($mod $c)$ is ergodic, it follows that the map $\psi_N$ is ergodic.

In the case that there is a rational number $c>0$ such that every periodic orbit has period equal to an integer multiple of $c$, we replace $\log_2$ with $\log_3$ in the definition of our distance function $|.|_{\theta}$ which makes the roof function. Then the corresponding constant for our new suspension flow is irrational, and apply the previous argument in this setting.

Finally we deal with the case that there exists no $c>0$ such that every periodic orbit of the suspension flow has period equal to an integer multiple of $c$. In \cite[Proposition 5]{QuasSoo} it is shown that this ensures that the flow is weak mixing. But if the suspension flow $\psi$ is weak mixing then the corresponding time-N map $\psi_N$ is ergodic as required. \end{proof}

Finally we complete the proof of  Proposition \ref{SuspensionProp}. Let $\pi_2:W\to \mathbb{PR}^1$ be given by $\pi_2(\underline a,\theta,t)=\theta$, and let the measure $\nu_F$ on $\mathbb{PR}^1$ be given by $\nu_F=\big((\mu\times\mu_F\times\mathcal L)|_{W}\big)\circ \pi_2^{-1}$. This measure is equivalent to $\mu_F$.

If $\psi_N(\underline a,\theta,0)$ equidistributes with respect to $(\mu\times\mu_F\times\mathcal L)$ then the sequence $\big(\phi_{a_{n_i}\cdots a_1}(\theta)\big)_{i=1}^{\infty}$ equidistributes with respect to $\nu_F$. This is because, from the definition of the sequence $a_{n_i}$, 
\[
\phi_{a_{n_i-1}\cdots a_1}(\theta)=\pi_2\big(\psi_{iN}(\underline a,\theta,0)\big).
\]
Then it is enough to prove that for all $\theta\in B$, for $\mu$-almost all $\underline a$, the sequence $\big(\psi_{iN}(\underline a,\theta,0)\big)_{i=0}^{\infty}$ equidistributes with respect to $\nu_F$. We have already argued that this holds for $(\mu\times\mu_F)-$almost all pairs $(\underline a,\theta)$, so the extension to all $\theta$ follows readily from Lemma \ref{time} and the fact that for all $\theta\in B$, for $\mu$-almost every $\underline a$, the orbit $\big(\phi_{a_n\cdots a_1}(\theta)\big)$ gets arbitrarily close to some $\mu_F$-typical point of $\mathbb{PR}^1$. 



\bibliographystyle{plain} 
\bibliography{SelfAffine.bib}

\end{document}